\newtheorem{theorem}{Theorem}
\newtheorem{lemma}[theorem]{Lemma}
\newtheorem{proposition}[theorem]{Proposition}
\newtheorem{corollary}[theorem]{Corollary}
\theoremstyle{definition}
\newtheorem{definition}[theorem]{Definition}
\newtheorem{remark}[theorem]{Remark}
\begin{document}
	
\begin{frontmatter}

\title{The Frobenius number for sequences of triangular and tetrahedral numbers}

\author[AMRP]{A.M.~Robles-P\'erez\corref{cor1}\fnref{fn1}}
\ead{arobles@ugr.es}

\author[JCR]{J.C.~Rosales\fnref{fn1}}
\ead{jrosales@ugr.es}

\cortext[cor1]{Corresponding author}
\fntext[fn1]{Both authors are supported by the project MTM2014-55367-P, which is funded by Mi\-nis\-terio de Econom\'{\i}a y Competitividad and Fondo Europeo de Desarrollo Regional FEDER, and by the Junta de Andaluc\'{\i}a Grant Number FQM-343. The second author is also partially supported by the Junta de Andaluc\'{\i}a/Feder Grant Number FQM-5849.}

\address[AMRP]{Departamento de Matem\'atica Aplicada, Facultad de Ciencias, Universidad de Granada, 18071-Granada, Spain.}
\address[JCR]{Departamento de \'Algebra, Facultad de Ciencias, Universidad de Granada, 18071-Granada, Spain.}

\begin{abstract}
	We compute the Frobenius number for sequences of triangular and tetrahedral numbers. In addition, we study some properties of the numerical semigroups associated to those sequences.
\end{abstract}

\begin{keyword}
	Frobenius number \sep triangular numbers \sep tetrahedral numbers \sep telescopic sequences \sep free numerical semigroups.
	
	\MSC[2010] 11D07
\end{keyword}

\end{frontmatter}

\section{Introduction}

According to \cite{brauer}, Frobenius raised in his lectures the following question: given relatively prime positive integers $a_1,\ldots,a_n$, compute the largest natural number that is not representable as a non-negative integer linear combination of $a_1,\ldots,a_n$. Nowadays, it is known as the Frobenius (coin) problem. Moreover, the solution is called the Frobenius number of the set $\{a_1,\ldots,a_n\}$ and it is denoted by ${\mathrm F}(a_1, \ldots, a_n)$.

It is well known (see \cite{sylvester2, sylvester3}) that ${\mathrm F}(a_1,a_2)=a_1a_2-a_1-a_2$. However, at present, the Frobenius problem is open for $n \geq 3$. More precisely, Curtis showed in \cite{curtis} that it is impossible to find a polynomial formula (this is, a finite set of polynomials) that computes the Frobenius number if $n=3$. In addition, Ram\'{\i}rez Alfons\'{\i}n proved in \cite{alfonsinNP} that this problem is NP-hard for $n$ variables.

Many papers study particular cases (see \cite{alfonsin} for more details). Specially, when $\{a_1,\ldots,a_n\}$ is part of a ``classic'' integer sequences: arithmetic and almost arithmetic (\cite{brauer,roberts,lewin,selmer}), Fibonacci (\cite{marin-alfonsin-revuelta}), geometric (\cite{ong-ponomarenko}), Mersenne (\cite{mersenne}), repunit (\cite{repunit}), squares and cubes (\cite{squares-cubes,moscariello}), Thabit (\cite{thabit}), et cetera.

For example, in \cite{brauer} Brauer proves that
\begin{equation}\label{brauer1}
{\mathrm F}(n,n+1,\ldots,n+k-1)=\Big(\Big\lfloor \frac{n-2}{k-1} \Big\rfloor +1 \Big)n-1,
\end{equation}
where, if $x\in{\mathbb R}$, then $\lfloor x \rfloor \in {\mathbb Z}$ and $\lfloor x \rfloor\leq x < \lfloor x \rfloor+1$. On the other hand, denoting by $a(n)={\mathrm F}\Big(\frac{n(n+1)}{2},\frac{(n+1)(n+2)}{2},\frac{(n+2)(n+3)}{2}\Big)$, for $n\in{\mathbb N} \setminus \{0\}$, C. Baker conjectured that (see \url{https://oeis.org/A069755/internal})
\begin{equation}\label{baker1}
a(n) = \frac{-14 + 6(-1)^n + (3+9(-1)^n)n + 3(5+(-1)^n)n^2 + 6n^3}{8};
\end{equation}
\begin{equation}\label{baker2}
a(n) = \frac{6n^3 + 18n^2 + 12n - 8}{8}, \; \mbox{ for $n$ even};
\end{equation}
\begin{equation}\label{baker3}
a(n) = \frac{6n^3 + 12n^2 - 6n - 20}{8}, \; \mbox{ for $n$ odd}.
\end{equation}
Let us observe that both of these examples are particular cases of combinatorial numbers (or binomial coefficients) sequences, that is,
\begin{itemize}
	\item ${n \choose 1},{n+1 \choose 1},\ldots,{n+k-1 \choose 1}$ in the first case,
	\item ${n+1 \choose 2},{n+2 \choose 2},{n+3 \choose 2}$ in the second one.
\end{itemize}
Let us recall that ${n+1 \choose 2}$ is known as a \emph{triangular} (or \emph{triangle}) \emph{number} and that the \emph{tetrahedral numbers} correspond to ${n+2 \choose 3}$. These classes of numbers are precisely the aim of this paper.

In order to achieve our purpose, we use a well-known formula by Johnson (\cite{johnson}): if $a_1,a_2,a_3$ are relatively prime numbers and $\gcd\{a_1,a_2\}=d$, then
\begin{equation}\label{eq-johnson}
{\mathrm F}(a_1, a_2, a_3) = d{\mathrm F}\Big(\frac{a_1}{d},\frac{a_2}{d},a_3\Big) + (d-1)a_3.
\end{equation}
In fact, we use the well-known generalization by Brauer and Shockley (\cite{brauer-shockley}): if $a_1,\ldots,a_n$ are relatively prime numbers and $d=\gcd\{a_1,\ldots,a_{n-1}\}$, then
\begin{equation}\label{eq-brauer-shockley}
{\mathrm F}(a_1,\ldots,a_n) = d{\mathrm F}\Big(\frac{a_1}{d},\ldots,\frac{a_{n-1}}{d},a_n\Big) + (d-1)a_n.
\end{equation}
An interesting situation, to apply these formulae, corresponds with telescopic sequences (\cite{kirfel-pellikaan}) and leads to free numerical semigroups, which were introduced by Bertin and Carbonne (\cite{bertin-carbonne-1,bertin-carbonne-2}) and previously used by Watanabe (\cite{watanabe}). Let us note that this idea does not coincide with the categorical concept of a free object.

\begin{definition}\label{telescopic-sequence}
	Let $(a_1,\ldots,a_n)$ be a sequence of positive integers such that $\gcd\{a_1,\ldots,a_n\}=1$ (where $n\geq2$). Let $d_i=\gcd\{a_1,\ldots,a_i\}$ for $i=1,\ldots,n$. We say that $(a_1,\ldots,a_n)$ is a \emph{telescopic sequence} if $\frac{a_i}{d_i}$ is representable as a non-negative integer linear combination of $\frac{a_1}{d_{i-1}},\ldots,\frac{a_{i-1}}{d_{i-1}}$ for $i=2,\ldots,n$.
\end{definition}

Let us observe that, if $(a_1,\ldots,a_n)$ is a telescopic sequence, then the sequence $\big(\frac{a_1}{d_i},\ldots,\frac{a_i}{d_i}\big)$ is also telescopic for $i=2,\ldots,n-1$.

Let $({\mathbb N},+)$ be the additive monoid of non-negative integers. We say that $S$ is a \emph{numerical semigroup} if it is an additive subsemigroup of ${\mathbb N}$ which satisfies $0\in S$ and ${\mathbb N} \setminus S$ is a finite set.

Let $X=\{x_1,\ldots,x_n\}$ be a non-empty subset of ${\mathbb N} \setminus \{0\}$. We denote by $\langle X \rangle = \langle x_1,\ldots,x_n\rangle$ the monoid generated by $X$, that is,
$$\langle X \rangle=\{\lambda_1x_1+\cdots+\lambda_nx_n \mid \lambda_1,\ldots,\lambda_n\in{\mathbb N}\}.$$
It is well known (see \cite{springer}) that every submonoid $S$ of $({\mathbb N},+)$ has a unique \emph{minimal system of generators}, that is, there exists a unique $X$ such that $S=\langle X \rangle$ and $S\not=\langle Y \rangle$ for any $Y\subsetneq X$. In addition, $X$ is a system of generators of a numerical semigroup if and only if $\gcd(X)=1$.

Let $X=\{x_1,\ldots,x_n\}$ be the minimal system of generators of a numerical semigroup $S$. Then $n$ (that is, the cardinality of $X$) is called the \emph{embedding dimension} of $S$ and it is denoted by ${\mathrm e}(S)$.

\begin{definition}\label{free-sg}
	We say that $S$ is a \emph{free numerical semigroup} if there exists a telescopic sequence $(a_1,\ldots,a_n)$ such that $S=\langle a_1,\ldots,a_n\rangle$.
\end{definition}

Our purpose in this work is taking advantage of the notion of telescopic sequence in order to compute the Frobenius number associated to sequences of consecutive triangular (or tetrahedral) numbers. In order to achieve this purpose, let us observe two facts.
\begin{enumerate}
	\item Two consecutive triangular numbers are not relatively prime (Lemma~\ref{lem1}).
	\item It is easy to check that, if $n\geq 6$, then $\Big( {n+1 \choose 2},{n+2 \choose 2},{n+3 \choose 2},{n+4 \choose 2} \Big)$ is a sequence of four consecutive (relatively prime) triangular numbers but it does not admit any permutation which is telescopic.
\end{enumerate}
Therefore, we have to limit our study to sequences of three consecutive triangular numbers. In the same way, we have to take sequences of four consecutive tetrahedral numbers. In addition, it is possible to apply our techniques (in a much more tedious study) to sequences of five consecutive combinatorial numbers ${n \choose m}$ with $m=4$ (see Remark~\ref{rem-13}). However, if $m\geq5$, it is not possible to use such tools because, in general, we have not got telescopic sequences (see Remark~\ref{rem-14}).

Let us summarize the content of this work. In Section~\ref{tri-numbers} we compute the Frobenius number of three consecutive triangular numbers. In Section~\ref{tetra-numbers} we solve the analogue case for four consecutive tetrahedral numbers. In the last section, we show some results on numerical semigroups generated by three consecutive triangular numbers or four consecutive tetrahedral numbers, taking advantage of the fact that they are free numerical semigroups. Finally, point out that, to get a self-contained paper, we have included in Section~\ref{consequences} a preliminary subsection with backgrounds on minimal presentations, Ap\'ery sets, and Betti elements of a numerical semigroup.

\section{Triangular numbers}\label{tri-numbers}

Let us recall that a \emph{triangular number} (or \emph{triangle number}) is a positive integer which counts the number of dots composing an equilateral triangle. For example, in Figure~\ref{fig1} we show the first six triangular numbers.

\begin{figure}[ht]
	\centering
	\includegraphics[trim = 130pt 555pt 130pt 125pt, clip, width=1\textwidth]{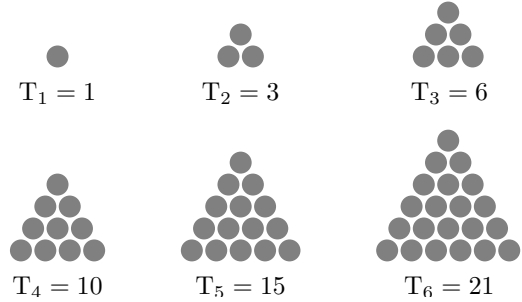}
	\caption{The first six triangular numbers}
	\label{fig1}
\end{figure}

It is well known that the $n$th triangular number is given by the combinatorial number ${\mathrm T}_n={n+1 \choose 2}$.

In order to compute the Frobenius number of a sequence of three triangular numbers, we need to determine if we have a sequence of relatively prime integers. First, we give a technical lemma.

\begin{lemma}\label{lem1}
	We have that
	$$\gcd \{{\mathrm T}_n,{\mathrm T}_{n+1}\} = \left\{ \begin{array}{l} \frac{n+1}{2}, \mbox{ if $n$ is odd;} \\[2pt] n+1, \mbox{ if $n$ is even.} \end{array} \right.$$
\end{lemma}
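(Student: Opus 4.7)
The plan is to substitute the closed form ${\mathrm T}_k = \frac{k(k+1)}{2}$ into both ${\mathrm T}_n$ and ${\mathrm T}_{n+1}$ and to treat the two parities of $n$ separately, so that in each case one can factor out an integer quantity and reduce the problem to computing the gcd of two consecutive-ish integers. The only real issue is placing the factor of $2$ on the correct side of the fraction bar depending on whether $n$ or $n+1$ is even.

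First, suppose $n$ is odd. Then $n+1$ is even, so $\frac{n+1}{2}\in{\mathbb N}$, and we can write
$$
{\mathrm T}_n = n\cdot\frac{n+1}{2}, \qquad {\mathrm T}_{n+1} = \frac{n+1}{2}\cdot(n+2).
$$
Pulling out the common factor gives $\gcd\{{\mathrm T}_n,{\mathrm T}_{n+1}\} = \frac{n+1}{2}\cdot\gcd\{n,n+2\}$. Since any common divisor of $n$ and $n+2$ divides $2$, and $n$ is odd, we get $\gcd\{n,n+2\}=1$, hence $\gcd\{{\mathrm T}_n,{\mathrm T}_{n+1}\}=\frac{n+1}{2}$.

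Next, suppose $n$ is even, say $n=2m$. Then
$$
{\mathrm T}_n = \frac{2m(n+1)}{2} = m(n+1), \qquad {\mathrm T}_{n+1} = \frac{(n+1)\cdot 2(m+1)}{2} = (m+1)(n+1),
$$
so $\gcd\{{\mathrm T}_n,{\mathrm T}_{n+1}\} = (n+1)\cdot\gcd\{m,m+1\} = n+1$, using that consecutive integers are coprime.

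Since neither step requires more than an elementary parity argument and a gcd of two integers differing by $1$ or $2$, the proof is essentially immediate; the only thing to be careful about is the bookkeeping of the factor $2$ in the formula for ${\mathrm T}_k$, and the trivial observation that $\gcd(n,n+2)$ in the odd case must be odd so the even divisor $2$ does not survive.
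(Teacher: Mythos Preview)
Your proof is correct and follows essentially the same approach as the paper: factor out $\frac{n+1}{2}$ (odd case) or $n+1$ (even case) from both triangular numbers and observe that the remaining cofactors are coprime. The only cosmetic difference is that the paper writes the residual gcd as $\gcd\{n,2\}$ and $\gcd\{\frac{n}{2},1\}$ after one subtraction step, whereas you phrase it as $\gcd\{n,n+2\}$ and $\gcd\{m,m+1\}$.
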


\begin{proof}
	If $n$ is odd, then we have that
	$$\gcd \{{\mathrm T}_n,{\mathrm T}_{n+1}\} = \gcd \left\{ \frac{n(n+1)}{2}, \frac{(n+1)(n+2)}{2}  \right\} = \frac{n+1}{2} \gcd\{n,2\} = \frac{n+1}{2}.$$
	On the other hand, if $n$ is even, then
	$$\gcd \{{\mathrm T}_n,{\mathrm T}_{n+1}\} = \gcd \left\{ \frac{n(n+1)}{2}, \frac{(n+1)(n+2)}{2}  \right\} =  (n+1) \gcd\left\{ \frac{n}{2}, 1 \right\} = n+1.$$
\end{proof}

In the following lemma we show that three consecutive triangular numbers are always relatively prime.

\begin{lemma}\label{lem2}
	$\gcd \{{\mathrm T}_n,{\mathrm T}_{n+1},{\mathrm T}_{n+2}\}=1.$
\end{lemma}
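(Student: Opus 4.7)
The plan is to apply Lemma~\ref{lem1} to the two overlapping pairs of consecutive triangular numbers. Setting $d = \gcd\{{\mathrm T}_n, {\mathrm T}_{n+1}, {\mathrm T}_{n+2}\}$, I would first note that $d$ must divide both $\gcd\{{\mathrm T}_n, {\mathrm T}_{n+1}\}$ and $\gcd\{{\mathrm T}_{n+1}, {\mathrm T}_{n+2}\}$, so it suffices to show that these two numbers are themselves coprime.

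A case split on the parity of $n$ is natural because Lemma~\ref{lem1} branches that way. When $n$ is odd, the lemma yields $(n+1)/2$ for the first pair, while applying it again at the now even index $n+1$ yields $n+2$ for the second pair; since $(n+1)/2$ divides $n+1$ and $\gcd\{n+1, n+2\} = 1$, the two values are coprime and $d=1$. The even case is perfectly symmetric, with the roles of the two gcds interchanged: one obtains $n+1$ and $(n+2)/2$, whose gcd again divides $\gcd\{n+1, n+2\} = 1$.

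No step should pose real difficulty; the only point to watch is correctly shifting the parity condition when reapplying Lemma~\ref{lem1} at the index $n+1$. If a more uniform argument is preferred, one can bypass the case split by using the identity ${\mathrm T}_{k+1} - {\mathrm T}_k = k+1$: any common divisor of ${\mathrm T}_k$ and ${\mathrm T}_{k+1}$ must divide $k+1$, so any $d$ dividing all of ${\mathrm T}_n, {\mathrm T}_{n+1}, {\mathrm T}_{n+2}$ divides both $n+1$ and $n+2$, hence $d = 1$.
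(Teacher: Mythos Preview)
Your primary argument is correct and essentially mirrors the paper's proof: both apply Lemma~\ref{lem1} to the two overlapping pairs, split on the parity of $n$, and then verify that the resulting pairwise gcds $(n+1)/2$ and $n+2$ (respectively $n+1$ and $(n+2)/2$) are coprime---the paper does this via one Euclidean subtraction to reach consecutive integers, you via the divisibility $(n+1)/2 \mid n+1$ together with $\gcd\{n+1,n+2\}=1$, which is equivalent. Your alternative one-line argument using $T_{k+1}-T_k=k+1$ is a genuinely cleaner route that avoids Lemma~\ref{lem1} and the parity split altogether; the paper does not take it, but it is entirely valid.
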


\begin{proof}
	By Lemma~\ref{lem1}, if $n$ is odd, then
	$$\gcd \{{\mathrm T}_n,{\mathrm T}_{n+1},{\mathrm T}_{n+2}\} = \gcd \big\{ \gcd \{ {\mathrm T}_n,{\mathrm T}_{n+1} \}, \gcd \{ {\mathrm T}_{n+1},{\mathrm T}_{n+2} \}  \big\} =$$ 
	$$\gcd \left\{  \frac{n+1}{2},n+2 \right\} = \gcd \left\{ \frac{n+1}{2}, \frac{n+1}{2} +1 \right\}=1.$$
	
	The proof is similar if $n$ is even. Therefore, we omit it. 
\end{proof}

In the next result, we show the key to obtain the answer to our question.

\begin{proposition}\label{prop3}
	The sequences $({\mathrm T}_n,{\mathrm T}_{n+1},{\mathrm T}_{n+2})$ and $({\mathrm T}_{n+2},{\mathrm T}_{n+1},{\mathrm T}_n)$ are telescopic.
\end{proposition}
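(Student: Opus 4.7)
The plan is to verify Definition \ref{telescopic-sequence} directly by computing the partial gcds $d_1,d_2,d_3$ and checking the required representability condition at each step. Lemma \ref{lem2} already gives $d_3=1$, so the gcd hypothesis is in place, and Lemma \ref{lem1} provides explicit formulas for $d_2$. The verification of the condition $a_i/d_i \in \langle a_1/d_{i-1},\ldots,a_{i-1}/d_{i-1}\rangle$ for $i=2$ is trivial (since $a_1/d_1=1$ always generates all of $\mathbb{N}$), so the real content lies in the case $i=3$.

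First I would handle the sequence $(T_n,T_{n+1},T_{n+2})$, splitting by the parity of $n$. If $n$ is odd, Lemma \ref{lem1} gives $d_2=(n+1)/2$, so $T_n/d_2=n$ and $T_{n+1}/d_2=n+2$; the required witness is $T_{n+2} = \frac{n+3}{2}(n+2) + 0\cdot n$, and this is a non-negative integer combination because $n+3$ is even. If $n$ is even, Lemma \ref{lem1} gives $d_2=n+1$, so $T_n/d_2=n/2$ and $T_{n+1}/d_2=(n+2)/2$; the representation is $T_{n+2} = (n+3)\cdot\frac{n+2}{2} + 0\cdot\frac{n}{2}$, and $n/2,(n+2)/2$ are integers since $n$ is even.

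Next I would do the reversed sequence $(T_{n+2},T_{n+1},T_n)$ in the same way. Here $d_2=\gcd\{T_{n+1},T_{n+2}\}$, which Lemma \ref{lem1} computes by the parity of $n+1$. If $n$ is even, then $d_2=(n+2)/2$, giving $T_{n+2}/d_2=n+3$ and $T_{n+1}/d_2=n+1$; we write $T_n = \frac{n}{2}(n+1) + 0\cdot(n+3)$. If $n$ is odd, then $d_2=n+2$, giving $T_{n+2}/d_2=(n+3)/2$ and $T_{n+1}/d_2=(n+1)/2$; we write $T_n = n\cdot\frac{n+1}{2} + 0\cdot\frac{n+3}{2}$.

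In every case the suitable coefficient pair is $(0,\star)$ or $(\star,0)$, so there is no Diophantine obstacle; the only thing to be careful about is keeping track of which quantity is even in each parity branch so that the coefficients lie in $\mathbb{N}$. I do not expect any serious difficulty — this is essentially a four-case bookkeeping argument, and the mild subtlety is simply to notice that among the two denominators $n$ and $n+3$ (respectively $n$ and $n+3$ divided by $2$) that appear in factoring $T_{n+2}$ or $T_n$, exactly one yields an integer coefficient for each parity of $n$, which is exactly what makes the telescopic representation work.
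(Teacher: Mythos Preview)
Your proposal is correct and follows essentially the same approach as the paper: verify Definition~\ref{telescopic-sequence} directly using Lemmas~\ref{lem1} and \ref{lem2} to compute the partial gcds, then exhibit the explicit representation of $a_3/d_3$ in $\langle a_1/d_2, a_2/d_2\rangle$ as a multiple of a single generator. You are in fact slightly more thorough than the paper, which writes out only the odd-$n$ case for each sequence and leaves the even case to the reader, and which does not explicitly remark that the $i=2$ condition is automatic.
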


\begin{proof}
	Let $n$ be an odd integer. From Lemmas~\ref{lem1} and \ref{lem2}, $\gcd\{{\mathrm T}_n,{\mathrm T}_{n+1}\}=\frac{n+1}{2}$ and $\gcd\{{\mathrm T}_n,{\mathrm T}_{n+1},{\mathrm T}_{n+2}\}=1$. Now, it is obvious that $$\frac{{\mathrm T}_{n+2}}{1} = \frac{n+3}{2}(n+2) \in \left\langle \frac{{\mathrm T}_n}{\,\frac{n+1}{2}\,}, \frac{{\mathrm T}_{n+1}}{\,\frac{n+1}{2}\,} \right\rangle = \langle n,n+2 \rangle.$$
	Therefore, $({\mathrm T}_n,{\mathrm T}_{n+1},{\mathrm T}_{n+2})$ is telescopic if $n$ is odd.

	Once again, from Lemmas~\ref{lem1} and \ref{lem2}, we have that $\gcd\{{\mathrm T}_{n+2},{\mathrm T}_{n+1}\}=n+2$ (observe that $n+1$ is even) and $\gcd\{{\mathrm T}_n,{\mathrm T}_{n+1},{\mathrm T}_{n+2}\}=1$. Then it is clear that $$\frac{{\mathrm T}_{n}}{1} = \frac{n}{2}(n+1) \in \left\langle \frac{{\mathrm T}_{n+2}}{\,n+2\,}, \frac{{\mathrm T}_{n+1}}{\,n+2\,} \right\rangle = \langle \frac{\,n+3\,}{2},{\,n+1\,}{2} \rangle.$$
	Thus, $({\mathrm T}_{n+2},{\mathrm T}_{n+1},{\mathrm T}_n)$ is telescopic if $n$ is odd. 
	 
	In a similar way, we can show that $({\mathrm T}_n,{\mathrm T}_{n+1},{\mathrm T}_{n+2})$ and $({\mathrm T}_{n+2},{\mathrm T}_{n+1},{\mathrm T}_n)$ are telescopic if $n$ is even.
\end{proof}

Now we are ready to give the main result of this section.

\begin{proposition}\label{prop5}
	Let $n \in {\mathbb N} \setminus \{0\}$. Then
	$${\mathrm F}({\mathrm T}_n,{\mathrm T}_{n+1},{\mathrm T}_{n+2})=\left\{ \begin{array}{l} \frac{3n^3+6n^2-3n-10}{4}, \mbox{ if $n$ is odd;} \\[3pt] \frac{3n^3+9n^2+6n-4}{4}, \mbox{ if $n$ is even.} \end{array} \right.$$
	Equivalently,
	\begin{equation}\label{eq-prop5}
	{\mathrm F}({\mathrm T}_n,{\mathrm T}_{n+1},{\mathrm T}_{n+2})= \left\lfloor \frac{n}{2} \right\rfloor ({\mathrm T}_n+{\mathrm T}_{n+1}+{\mathrm T}_{n+2}-1)-1.
	\end{equation}
\end{proposition}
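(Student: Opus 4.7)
The plan is to apply Johnson's formula \eqref{eq-johnson} once, leveraging Proposition~\ref{prop3}. Since $({\mathrm T}_n,{\mathrm T}_{n+1},{\mathrm T}_{n+2})$ is telescopic, the integer ${\mathrm T}_{n+2}$ lies in the semigroup $\langle {\mathrm T}_n/d, {\mathrm T}_{n+1}/d \rangle$, where $d=\gcd\{{\mathrm T}_n,{\mathrm T}_{n+1}\}$. Hence
$${\mathrm F}\!\left(\tfrac{{\mathrm T}_n}{d},\tfrac{{\mathrm T}_{n+1}}{d},{\mathrm T}_{n+2}\right) = {\mathrm F}\!\left(\tfrac{{\mathrm T}_n}{d},\tfrac{{\mathrm T}_{n+1}}{d}\right),$$
which, by Sylvester's formula, equals $\tfrac{{\mathrm T}_n}{d}\tfrac{{\mathrm T}_{n+1}}{d}-\tfrac{{\mathrm T}_n}{d}-\tfrac{{\mathrm T}_{n+1}}{d}$. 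Then \eqref{eq-johnson} gives
$${\mathrm F}({\mathrm T}_n,{\mathrm T}_{n+1},{\mathrm T}_{n+2}) = d\,{\mathrm F}\!\left(\tfrac{{\mathrm T}_n}{d},\tfrac{{\mathrm T}_{n+1}}{d}\right)+(d-1){\mathrm T}_{n+2}.$$

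Next I split into the two parities supplied by Lemma~\ref{lem1}. When $n$ is odd, $d=(n+1)/2$ and the reduced generators are ${\mathrm T}_n/d=n$, ${\mathrm T}_{n+1}/d=n+2$, so Sylvester yields $n^2-2$; substituting into the displayed formula and expanding produces
$$\frac{2(n+1)(n^2-2)+(n-1)(n+2)(n+3)}{4}=\frac{3n^3+6n^2-3n-10}{4}.$$
When $n$ is even, $d=n+1$ and the reduced generators are $n/2$ and $(n+2)/2$ (consecutive integers, hence coprime); the corresponding Sylvester value is $(n^2-2n-4)/4$, and a parallel expansion gives $(3n^3+9n^2+6n-4)/4$.

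To obtain the unified expression \eqref{eq-prop5}, I would compute
$${\mathrm T}_n+{\mathrm T}_{n+1}+{\mathrm T}_{n+2}-1=\tfrac{3n^2+9n+6}{2}$$
and check that $\lfloor n/2\rfloor\cdot\tfrac{3n^2+9n+6}{2}-1$ reproduces the two piecewise polynomials according to whether $\lfloor n/2\rfloor$ equals $(n-1)/2$ or $n/2$. This is a routine algebraic identity.

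The only point requiring real care is verifying that the reduced sequence $(\tfrac{{\mathrm T}_n}{d},\tfrac{{\mathrm T}_{n+1}}{d},{\mathrm T}_{n+2})$ actually collapses to a two-generator numerical semigroup, so that Sylvester applies; this is precisely what the telescopic property in Proposition~\ref{prop3} guarantees, so no new work is needed. The rest is bookkeeping, which is why the main challenge reduces to matching the coefficients cleanly in both parities and confirming the floor-function reformulation.
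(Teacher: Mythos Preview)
Your proposal is correct and follows essentially the same route as the paper: apply Johnson's formula \eqref{eq-johnson} with $d=\gcd\{{\mathrm T}_n,{\mathrm T}_{n+1}\}$ from Lemma~\ref{lem1}, use the telescopic property (Proposition~\ref{prop3}) to drop ${\mathrm T}_{n+2}$ from the reduced semigroup, and finish with Sylvester's two-generator formula. The only difference is cosmetic: you make explicit the step ${\mathrm F}({\mathrm T}_n/d,{\mathrm T}_{n+1}/d,{\mathrm T}_{n+2})={\mathrm F}({\mathrm T}_n/d,{\mathrm T}_{n+1}/d)$ and the verification of \eqref{eq-prop5}, which the paper leaves implicit.
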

	
\begin{proof}
	Let $n$ be an odd positive integer. From (\ref{eq-johnson}) (or (\ref{eq-brauer-shockley})) and the proof of Proposition~\ref{prop3}, we have that
	$${\mathrm F}({\mathrm T}_n,{\mathrm T}_{n+1},{\mathrm T}_{n+2})= \frac{n+1}{2} {\mathrm F} \left( \frac{{\mathrm T}_n}{\,\frac{n+1}{2}\,}, \frac{{\mathrm T}_{n+1}}{\,\frac{n+1}{2}\,}, {\mathrm T}_{n+2} \right) + \frac{n-1}{2}{\mathrm T}_{n+2}=$$
	$$\frac{n+1}{2} {\mathrm F} \left( n, n+2 \right) + \frac{n-1}{2}\frac{(n+2)(n+3)}{2}$$
	and, having in mind that ${\mathrm F} \left( n, n+2 \right)=n^2-2$, then the conclusion is obvious. On the other hand, the reasoning for even $n$ is similar. Finally, a straightforward computation leads to (\ref{eq-prop5}).
\end{proof}

\section{Tetrahedral numbers}\label{tetra-numbers}

Let us recall that a \emph{tetrahedral number} (or \emph{triangular pyramidal number}) is a positive integer which counts the number of balls composing a regular tetrahedron. The $n$th tetrahedral number is given by the combinatorial number $\mathrm{TH}_n={n+2 \choose 3}$. Thus, in Figure~\ref{fig2}, we see the pyramid (by layers) associated to the 5th tetrahedral number ($\mathrm{TH}_5=35$).

\begin{figure}[ht]
	\centering
	\includegraphics[trim = 135pt 557pt 135pt 125pt, clip, width=1\textwidth]{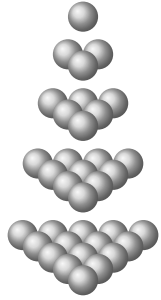}
	\caption{The tetrahedral number $\mathrm{TH}_5$ (representation by layers)}
	\label{fig2}
\end{figure}

In this section, our purpose is compute the Frobenius number for a sequence of four consecutive tetrahedral numbers.

We need a preliminary lemma with an immediate proof.

\begin{lemma}\label{lem-aure1}
	Let $(a_1,a_2,\ldots,a_n)$ be a sequence of positive integers such that $d_1=\gcd\{a_1,a_2,\ldots,a_n\}$. If $d_2=\gcd\{a_2-a_1,\ldots,a_n-a_{n-1}\}$, then $d_1 | d_2$. In particular, if $d_2=1$, then $d_1=1$.
\end{lemma}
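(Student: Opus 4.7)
The plan is straightforward, since the statement only uses the most elementary properties of divisibility. I would argue as follows.

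First I would observe that, by the very definition of $d_1$, we have $d_1 \mid a_i$ for every $i \in \{1,\ldots,n\}$. Therefore $d_1$ divides every consecutive difference $a_{i+1}-a_i$ for $i=1,\ldots,n-1$, since divisibility is preserved under subtraction.

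Next I would invoke the universal property of the gcd: because $d_1$ is a common divisor of the integers $a_2-a_1,\ldots,a_n-a_{n-1}$, it must divide their greatest common divisor, that is $d_1 \mid d_2$. This gives the main conclusion.

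Finally, for the ``in particular'' part, if $d_2=1$ then $d_1 \mid 1$, and since $d_1$ is a positive integer (it is a gcd of positive integers) we conclude $d_1=1$.

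I do not expect any real obstacle here; the statement is essentially a one-line application of the fact that a common divisor of a family of integers divides any integer linear combination of them, and then of the universal property of the gcd. The only thing worth noting in a written proof is that one should explicitly mention positivity of $d_1$ in the final sentence, to rule out the degenerate reading $d_1 = \pm 1$.
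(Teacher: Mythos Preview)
Your proof is correct and is exactly the kind of argument the paper has in mind; in fact the paper does not write out a proof at all, merely calling it ``a preliminary lemma with an immediate proof,'' and your three lines constitute that immediate proof.
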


Now, let us see that four consecutive tetrahedral numbers are always relatively prime.

\begin{lemma}\label{lem-aure2}
	$\gcd \{\mathrm{TH}_n,\mathrm{TH}_{n+1},\mathrm{TH}_{n+2},\mathrm{TH}_{n+3}\}=1.$	
\end{lemma}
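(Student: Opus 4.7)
The plan is to reduce the claim to Lemma~\ref{lem2} via the difference trick of Lemma~\ref{lem-aure1}. The key observation is that the first differences of consecutive tetrahedral numbers are precisely consecutive triangular numbers. Concretely, using $\mathrm{TH}_k=\tfrac{k(k+1)(k+2)}{6}$, a direct computation gives
\[
\mathrm{TH}_{k+1}-\mathrm{TH}_k \;=\; \frac{(k+1)(k+2)\bigl((k+3)-k\bigr)}{6} \;=\; \frac{(k+1)(k+2)}{2} \;=\; \mathrm{T}_{k+1},
\]
which is also visible from the hockey-stick-type identity $\binom{k+3}{3}-\binom{k+2}{3}=\binom{k+2}{2}$.

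Applied to our four-term sequence $(\mathrm{TH}_n,\mathrm{TH}_{n+1},\mathrm{TH}_{n+2},\mathrm{TH}_{n+3})$, the consecutive differences are exactly $(\mathrm{T}_{n+1},\mathrm{T}_{n+2},\mathrm{T}_{n+3})$, a block of three consecutive triangular numbers. By Lemma~\ref{lem2} we already know that $\gcd\{\mathrm{T}_{n+1},\mathrm{T}_{n+2},\mathrm{T}_{n+3}\}=1$.

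Finally, I would invoke Lemma~\ref{lem-aure1}: since the gcd of the consecutive differences is $1$, the gcd of the original terms must divide $1$, and therefore equals $1$. There is no real obstacle here — the whole argument is a one-line identity plus two previously established lemmas — so the proof should be essentially a reduction, with the only care being to check the indexing in $\mathrm{TH}_{k+1}-\mathrm{TH}_k=\mathrm{T}_{k+1}$ so that the resulting triangular triple is indeed consecutive (which it is).
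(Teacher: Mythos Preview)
Your proposal is correct and follows exactly the paper's approach: take consecutive differences to obtain three consecutive triangular numbers, apply Lemma~\ref{lem2}, and conclude via Lemma~\ref{lem-aure1}. In fact your indexing $\mathrm{TH}_{k+1}-\mathrm{TH}_k=\mathrm{T}_{k+1}$ is the correct one (the paper writes the differences as $(\mathrm{T}_n,\mathrm{T}_{n+1},\mathrm{T}_{n+2})$, an off-by-one slip that is harmless since Lemma~\ref{lem2} holds for any starting index).
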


\begin{proof}
	It is clear that
	$$(\mathrm{TH}_{n+1}-\mathrm{TH}_n, \mathrm{TH}_{n+2}-\mathrm{TH}_{n+1},\mathrm{TH}_{n+3}-\mathrm{TH}_{n+2})=({\mathrm T}_n, {\mathrm T}_{n+1}, {\mathrm T}_{n+2}).$$
	Therefore, by applying Lemmas~\ref{lem2} and \ref{lem-aure1}, we have the conclusion.
\end{proof}

The following lemma has an easy proof too. So, we omit it.

\begin{lemma}\label{lem-jc1}
	Let $n \in {\mathbb N}\setminus \{0\}$.
	\begin{enumerate}
		\item If $n=6k$, then $\gcd\left\{ \mathrm{TH}_n,\mathrm{TH}_{n+1} \right\} = (6k+1)(3k+1)$.
		\item If $n=6k+1$, then $\gcd\left\{ \mathrm{TH}_n,\mathrm{TH}_{n+1} \right\} = (3k+1)(2k+1)$.
		\item If $n=6k+2$, then $\gcd\left\{ \mathrm{TH}_n,\mathrm{TH}_{n+1} \right\} = (2k+1)(3k+2)$.
		\item If $n=6k+3$, then $\gcd\left\{ \mathrm{TH}_n,\mathrm{TH}_{n+1} \right\} = (3k+2)(6k+5)$.
		\item If $n=6k+4$, then $\gcd\left\{ \mathrm{TH}_n,\mathrm{TH}_{n+1} \right\} = (6k+5)(k+1)$.
		\item If $n=6k+5$, then $\gcd\left\{ \mathrm{TH}_n,\mathrm{TH}_{n+1} \right\} = (k+1)(6k+7)$.
	\end{enumerate}
\end{lemma}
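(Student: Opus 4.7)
The plan is to derive a single uniform expression for $\gcd\{\mathrm{TH}_n,\mathrm{TH}_{n+1}\}$ and then read off the six cases by substituting $n=6k+r$. First, using the identity $\gcd\{ma,mb\}=m\gcd\{a,b\}$ together with $6\mathrm{TH}_n=n(n+1)(n+2)$ and $6\mathrm{TH}_{n+1}=(n+1)(n+2)(n+3)$, I would write
\begin{equation*}
6\gcd\{\mathrm{TH}_n,\mathrm{TH}_{n+1}\}=\gcd\{n(n+1)(n+2),\,(n+1)(n+2)(n+3)\}=(n+1)(n+2)\gcd\{n,3\},
\end{equation*}
where the last equality uses $\gcd\{n,n+3\}=\gcd\{n,3\}$.

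Next, I would split according to whether $3\mid n$. When $3\mid n$, the right-hand side is $3(n+1)(n+2)$ and we get $\gcd\{\mathrm{TH}_n,\mathrm{TH}_{n+1}\}=(n+1)(n+2)/2$; otherwise the right-hand side is $(n+1)(n+2)$, yielding $\gcd\{\mathrm{TH}_n,\mathrm{TH}_{n+1}\}=(n+1)(n+2)/6$. In each case the quotient is automatically an integer: $(n+1)(n+2)/2$ always (product of two consecutive integers), and $(n+1)(n+2)/6$ precisely when $3\nmid n$ (since then $3$ divides one of $n+1,n+2$), which matches the case split.

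Finally, I would substitute $n=6k,6k+1,\ldots,6k+5$ and regroup the factors of $(n+1)(n+2)$ to absorb the denominator $2$ or $6$. For example, when $n=6k$ the denominator is $2$ and $(6k+1)(6k+2)/2=(6k+1)(3k+1)$; when $n=6k+1$ the denominator is $6$ and $(6k+2)(6k+3)/6=(3k+1)(2k+1)$; and analogously for the remaining four residues. This reproduces exactly the six expressions listed in the statement.

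I do not foresee any real obstacle: once the uniform formula is in place, the six cases amount to routine bookkeeping. The only point that deserves attention is the pullout of the factor $6$ in the first displayed equation, which silently relies on $6\mid n(n+1)(n+2)$ (always true) to ensure that dividing back by $6$ produces an integer on the left.
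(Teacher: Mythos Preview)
Your argument is correct. The paper omits the proof entirely (``The following lemma has an easy proof too. So, we omit it.''), so there is no approach to compare against; your derivation of the uniform formula $6\gcd\{\mathrm{TH}_n,\mathrm{TH}_{n+1}\}=(n+1)(n+2)\gcd\{n,3\}$ followed by the six substitutions is a clean and complete way to establish the lemma.
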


In the next two results, we give the tool for getting the answer to our problem.

\begin{proposition}\label{prop-aure3} 
	The sequence $(\mathrm{TH}_n,\mathrm{TH}_{n+1},\mathrm{TH}_{n+2},\mathrm{TH}_{n+3})$ is telescopic if and only if $n\equiv r \bmod 6$ with $r\in\{0,1,2,3\}$.
\end{proposition}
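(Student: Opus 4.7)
The plan is to handle the six residue classes $r = n \bmod 6$ separately. Since $d_1 = \mathrm{TH}_n$, the telescopic condition at $i = 2$ reduces to $\frac{\mathrm{TH}_{n+1}}{d_2} \in \langle 1 \rangle$, which always holds, so only $i = 3$ and $i = 4$ need to be checked. For each $r$, I would first expand the four consecutive tetrahedral numbers as explicit products by distributing the factors $2$ and $3$ of the denominator $6$ among the three consecutive numerator factors (for instance, for $n = 6k$: $\mathrm{TH}_n = 2k(3k+1)(6k+1)$ and $\mathrm{TH}_{n+2} = 2(3k+1)(2k+1)(3k+2)$). Combined with Lemma~\ref{lem-jc1}, this yields $d_2$; then $d_3 = \gcd\{d_2, \mathrm{TH}_{n+2}\}$ follows from short coprimality checks on small linear forms (e.g.\ $\gcd(6k+1, 3k+2) = \gcd(6k+1, 3) = 1$). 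The resulting quotients $\frac{\mathrm{TH}_j}{d_i}$ reduce to small polynomial expressions in $k$.

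For the telescopic cases $r \in \{0, 1, 2, 3\}$, I would exhibit explicit non-negative combinations verifying the $i = 3$ and $i = 4$ conditions. These decompositions turn out to be sparse: in the $r = 1$ and $r = 3$ cases, $\mathrm{TH}_{n+3}$ is a direct positive multiple of $\frac{\mathrm{TH}_{n+2}}{d_3}$, and in the $r = 0$ and $r = 2$ cases the required representation has two non-zero coefficients, which can be read off by reducing modulo one generator. These verifications are routine.

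The main obstacle will be the failure cases $r = 4, 5$. For $r = 5$, one must show $(2k+3)(3k+4) \notin \langle 6k+5, 6k+8 \rangle$. Writing $\lambda_1(6k+5) + \lambda_2(6k+8) = (2k+3)(3k+4) = 6k^2 + 17k + 12$ and reducing modulo $6k+5$ (using $3^{-1} \equiv 2k+2$) forces $\lambda_2 \equiv 4k+4 \pmod{6k+5}$; already $\lambda_2 = 4k+4$ gives $\lambda_2(6k+8) = 24k^2 + 56k + 32 > (2k+3)(3k+4)$, so $\lambda_1$ would have to be negative, a contradiction. For $r = 4$, the $i = 3$ condition does hold (via $\lambda_1 = 0$ and $\lambda_2 = 6k+8$), but $i = 4$ fails: reducing the defining equation for $\mathrm{TH}_{n+3} = (2k+3)(3k+4)(6k+7)$ modulo $6k+7$, together with $\gcd(6, 6k+7) = 1$, forces the coefficient of $\frac{\mathrm{TH}_n}{d_3} = 2(6k+5)(3k+2)$ to be divisible by $6k+7$, and the size inequality $(2k+3)(3k+4) < 2(6k+5)(3k+2)$ forces it to equal $0$. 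Dividing through by $6k+7$ then reduces the residual equation to exactly the failing $r = 5$ case.
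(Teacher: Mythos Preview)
Your proposal is correct and follows essentially the same route as the paper: a case analysis on $n \bmod 6$, with explicit non-negative combinations for $r\in\{0,1,2,3\}$ and non-representability arguments for $r\in\{4,5\}$ based on divisibility constraints and size bounds. Your observation that, after forcing the coefficient of $\frac{\mathrm{TH}_n}{d_3}$ to vanish, the $r=4$ obstruction reduces verbatim to the $r=5$ equation is a neat streamlining that the paper does not exploit (it treats the two residual Diophantine equations by parallel but independent arguments).
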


\begin{proof}
	We are going to study the six possible cases $n=6k+r$ with $k \in {\mathbb N}$ and $r\in\{0,1,\ldots,5\}$.
	
	\begin{enumerate}
		\item Let $n=6k$. Since
		$\gcd\{\mathrm{TH}_n,\mathrm{TH}_{n+1},\mathrm{TH}_{n+2}\}$ is equal to
		$$\gcd\big\{ \gcd\{\mathrm{TH}_n,\mathrm{TH}_{n+1} \}, \gcd\{ \mathrm{TH}_{n+1},\mathrm{TH}_{n+2} \} \big\},$$
		from items 1 and 2 of Lemma~\ref{lem-jc1}, we have $\gcd\{\mathrm{TH}_n,\mathrm{TH}_{n+1},\mathrm{TH}_{n+2}\} = \gcd \{ (6k+1)(3k+1), (3k+1)(2k+1) \} = 3k+1.$
		Now, it is easy to check that $\mathrm{TH}_{n+3} = 0 \frac{\mathrm{TH}_n}{3k+1} + (3k+2) \frac{\mathrm{TH}_{n+1}}{3k+1} + 2 \frac{\mathrm{TH}_{n+2}}{3k+1}$.
		
		On the other hand, since
		$$\frac{(\mathrm{TH}_n,\mathrm{TH}_{n+1},\mathrm{TH}_{n+2})}{3k+1} = \big( 2k(6k+1),(6k+1)(2k+1),2(2k+1)(3k+2) \big),$$
		$\gcd\{2k(6k+1),(6k+1)(2k+1)\}=6k+1$, and $$2(2k+1)(3k+2) = 0 \frac{2k(6k+1)}{6k+1} + 2(3k+2) \frac{(6k+1)(2k+1)}{6k+1},$$ we conclude that $(\mathrm{TH}_n,\mathrm{TH}_{n+1},\mathrm{TH}_{n+2},\mathrm{TH}_{n+3})$ is telescopic.
		
		\item Having in mind items 2 and 3 of Lemma~\ref{lem-jc1}, if $n=6k+1$, then we get that $\gcd\{\mathrm{TH}_n,\mathrm{TH}_{n+1},\mathrm{TH}_{n+2}\}=2k+1$. In addition,
		$$\mathrm{TH}_{n+3} = 0 \frac{\mathrm{TH}_n}{2k+1} + 0 \frac{\mathrm{TH}_{n+1}}{2k+1} + 2(k+1) \frac{\mathrm{TH}_{n+2}}{2k+1}.$$
		Since $\gcd \left\{ \frac{\mathrm{TH}_n}{2k+1}, \frac{\mathrm{TH}_{n+1}}{2k+1} \right\} = 3k+1$ and
		$$\frac{\mathrm{TH}_{n+2}}{2k+1} = (3k+2)\frac{\mathrm{TH}_n}{(2k+1)(3k+1)} + 2 \frac{\mathrm{TH}_{n+1}}{(2k+1)(3k+1)},$$
		we have the result.
		
		\item For $n=6k+2$, we have that $\gcd\{\mathrm{TH}_n,\mathrm{TH}_{n+1},\mathrm{TH}_{n+2}\}=3k+2$,
		$$\mathrm{TH}_{n+3} = 0 \frac{\mathrm{TH}_n}{3k+2} + 3(k+1) \frac{\mathrm{TH}_{n+1}}{3k+2} + 2 \frac{\mathrm{TH}_{n+2}}{3k+2},$$
		$\gcd \left\{ \frac{\mathrm{TH}_n}{3k+2}, \frac{\mathrm{TH}_{n+1}}{3k+2} \right\} = 2k+1$, and 
		$$\frac{\mathrm{TH}_{n+2}}{3k+2} = 0\frac{\mathrm{TH}_n}{(3k+2)(2k+1)} + 2(k+1)\frac{\mathrm{TH}_{n+1}}{(3k+2)(2k+1)}.$$
		
		\item For $n=6k+3$, we have that $\gcd\{\mathrm{TH}_n,\mathrm{TH}_{n+1},\mathrm{TH}_{n+2}\}=6k+5$,
		$$\mathrm{TH}_{n+3} = 0 \frac{\mathrm{TH}_n}{6k+5} + 0 \frac{\mathrm{TH}_{n+1}}{6k+5} + 2(3k+4) \frac{\mathrm{TH}_{n+2}}{6k+5},$$
		$\gcd \left\{ \frac{\mathrm{TH}_n}{6k+5}, \frac{\mathrm{TH}_{n+1}}{6k+5} \right\} = 3k+2$, and 
		$$\frac{\mathrm{TH}_{n+2}}{6k+5} = 3(k+1)\frac{\mathrm{TH}_n}{(6k+5)(3k+2)} + 2\frac{\mathrm{TH}_{n+1}}{(6k+5)(3k+2)}.$$
		
		\item If $n=6k+4$, then $\gcd\{\mathrm{TH}_n,\mathrm{TH}_{n+1},\mathrm{TH}_{n+2}\}=k+1$. Let us suppose that there exist $\alpha,\beta,\gamma \in {\mathbb N}$ such that
		$\mathrm{TH}_{n+3} = \alpha \frac{\mathrm{TH}_n}{k+1} + \beta \frac{\mathrm{TH}_{n+1}}{k+1} + \gamma \frac{\mathrm{TH}_{n+2}}{k+1}$. Then
		$$(6k+7)\big((3k+4)(2k+3)-(6k+5)\beta - 2(3k+4)\gamma\big) = 2(3k+2)(6k+5)\alpha.$$
		Since $\gcd\{6k+7,2\}=\gcd\{6k+7,3k+2\}=\gcd\{6k+7,6k+5\}=1$, then there exists $\tilde{\alpha} \in {\mathbb N}$ such that $\alpha=(6k+7)\tilde{\alpha}$. Therefore,
		$$(3k+4)(2k+3)-(6k+5)\beta - 2(3k+4)\gamma = 2(3k+2)(6k+5)\tilde\alpha$$
		and, consequently,
		$$(3k+4) (2k+3-2\gamma) = (6k+5)\big(\beta+2(3k+2)\tilde\alpha\big).$$
		Thus, since $\gcd\{3k+4,6k+5\}=1$, we conclude that $(6k+5) \mid (2k+3-2\gamma)$ (that is, $6k+5$ divides $2k+3-2\gamma$) and, thereby, $2k+3-2\gamma=0$. Now, having in mind that $\gamma$ is a non-negative integer, the equality $2k+3=2\gamma$ is not possible. That is, we have a contradiction.
		
		\item If $n=6k+5$, then $\gcd\{\mathrm{TH}_n,\mathrm{TH}_{n+1},\mathrm{TH}_{n+2}\}=6k+7$,
		$$\mathrm{TH}_{n+3} = 0 \frac{\mathrm{TH}_n}{6k+7} + 0 \frac{\mathrm{TH}_{n+1}}{6k+7} + 2(3k+5) \frac{\mathrm{TH}_{n+2}}{6k+7},$$
		and $\gcd \left\{ \frac{\mathrm{TH}_n}{6k+7}, \frac{\mathrm{TH}_{n+1}}{6k+7} \right\} = k+1$. Let us suppose that there exist $\alpha,\beta \in {\mathbb N}$ such that $\frac{\mathrm{TH}_{n+2}}{6k+7} = \alpha\frac{\mathrm{TH}_n}{(6k+7)(k+1)} + \beta\frac{\mathrm{TH}_{n+1}}{(6k+7)(k+1)}$. In such a case,
		$$(3k+4)(2k+3) = (6k+5)\alpha + 2(3k+4)\beta.$$
		Now, since $\gcd\{3k+4,6k+5\}=1$, then $\alpha=(3k+4)\tilde{\alpha}$ for some $\tilde{\alpha} \in {\mathbb N}$ and, consequently, $2k+3-2\beta=(6k+5)\tilde{\alpha}$, that is, $(6k+5) \mid (2k+3-2\beta)$. Reasoning as in the previous case, since $\beta$ is a non-negative integer, we get a contradiction once again. \qedhere
	\end{enumerate}
\end{proof}

Using the same techniques as in the previous proof, we have the next result.

\begin{proposition}\label{prop-aure4}
	The sequence $(\mathrm{TH}_{n+3},\mathrm{TH}_{n+2},\mathrm{TH}_{n+1},\mathrm{TH}_n)$ is telescopic if and only if $n\equiv r \bmod 6$ with $r\in\{4,5\}$.
\end{proposition}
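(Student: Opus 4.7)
The proof mirrors that of Proposition~\ref{prop-aure3}, with the roles of the first and last elements swapped. Writing $b_i = \mathrm{TH}_{n+4-i}$ and $e_i = \gcd\{b_1,\ldots,b_i\}$, we must decide, in each of the six residue classes $n \equiv r \pmod{6}$, whether both $b_3/e_3 \in \langle b_1/e_2,\,b_2/e_2\rangle$ and $b_4 \in \langle b_1/e_3,\,b_2/e_3,\,b_3/e_3\rangle$ hold, noting that $e_4 = 1$ by Lemma~\ref{lem-aure2}. The first step is to compute $e_2$ and $e_3$ in each case: by Lemma~\ref{lem-jc1} applied at indices $n+2$ and $n+1$, one obtains $e_2 = \gcd\{\mathrm{TH}_{n+3},\mathrm{TH}_{n+2}\}$ and $e_3 = \gcd\bigl\{e_2,\,\gcd\{\mathrm{TH}_{n+2},\mathrm{TH}_{n+1}\}\bigr\}$. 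This yields simple polynomial expressions in $k$ (where $n = 6k+r$) for the reduced generators $b_i/e_i$.

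For $r \in \{4,5\}$, the plan is to exhibit explicit non-negative integer combinations witnessing both telescopic conditions, in complete analogy with cases 1--4 of the proof of Proposition~\ref{prop-aure3}. Candidate coefficients can be guessed from the structure of those earlier cases (typically a small constant or a linear polynomial in $k$ on each generator) and confirmed by direct substitution after clearing the relevant gcd factors.

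For $r \in \{0,1,2,3\}$, the plan is to derive a contradiction in the spirit of cases 5 and 6 of Proposition~\ref{prop-aure3}. Assuming that $b_3/e_3$ (respectively $b_4$) admits a non-negative integer decomposition over the preceding reduced generators, one exploits pairwise coprimality among the linear factors that appear in the factorizations (drawn from $\{k+1,\,2k+1,\,2k+3,\,3k+1,\,3k+2,\,6k+1,\,6k+5,\,6k+7\}$) to force one coefficient to be a specific multiple of a given integer. Substituting back yields an equation whose left-hand side is a small positive quantity of the wrong parity, or simply too small to be matched by any non-negative right-hand side, which contradicts the assumption.

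The principal obstacle will be the bookkeeping across the four failing cases: identifying, for each residue class, whether it is the condition at level $i=3$ or the one at level $i=4$ that breaks down, and then pinpointing the correct pair of coprime linear factors that delivers the contradiction. No conceptual ingredient beyond what was already used in Proposition~\ref{prop-aure3} is required, but the relevant linear factors shift from one residue class to the next, so the calculations have to be redone for each case rather than transcribed from Proposition~\ref{prop-aure3}.
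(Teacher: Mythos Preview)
Your proposal is correct and follows exactly the approach the paper intends: the paper's own proof is simply the one-line remark ``Using the same techniques as in the previous proof, we have the next result,'' and your outline spells out precisely what those techniques amount to in the reversed ordering. If anything, your plan is more explicit than what the paper records, but the strategy---explicit witnesses for $r\in\{4,5\}$ and coprimality/parity contradictions for $r\in\{0,1,2,3\}$---is identical.
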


By combining Propositions~\ref{prop-aure3} and \ref{prop-aure4} with (\ref{eq-brauer-shockley}), it is clear that we can obtain the Frobenius number for every sequence $(\mathrm{TH}_n,\mathrm{TH}_{n+1},\mathrm{TH}_{n+2},\mathrm{TH}_{n+3})$. Thus we get the following result.

\begin{proposition}\label{prop5b}
	Let $n \in {\mathbb N}\setminus \{0\}$. Then ${\mathrm F}\left( \mathrm{TH}_n,\mathrm{TH}_{n+1},\mathrm{TH}_{n+2},\mathrm{TH}_{n+3} \right) =$
	\begin{enumerate}
		\item $\frac{n-3}{3}\mathrm{TH}_{n+1} + n\mathrm{TH}_{n+2} + \frac{n}{2}\mathrm{TH}_{n+3} - \mathrm{TH}_n$, if $n=6k$;
		\item $(n-1)\mathrm{TH}_{n+1} + \frac{n-1}{2}\mathrm{TH}_{n+2} + \frac{n-1}{3}\mathrm{TH}_{n+3} - \mathrm{TH}_n$, if $n=6k+1$;
		\item $(n-1)\mathrm{TH}_{n+1} + \frac{n-2}{3}\mathrm{TH}_{n+2} + \frac{n}{2}\mathrm{TH}_{n+3} - \mathrm{TH}_n$, if $n=6k+2$;
		\item $\frac{n-3}{3}\mathrm{TH}_{n+1} + \frac{n-1}{2}\mathrm{TH}_{n+2} + (n+1)\mathrm{TH}_{n+3} - \mathrm{TH}_n$, if $n=6k+3$;
		\item $\frac{n+2}{3}\mathrm{TH}_{n+2} + \frac{n+2}{2}\mathrm{TH}_{n+1} + (n+2)\mathrm{TH}_n - \mathrm{TH}_{n+3}$, if $n=6k+4$;
		\item $(n+4)\mathrm{TH}_{n+2} + \frac{n+1}{3}\mathrm{TH}_{n+1} + \frac{n+1}{2}\mathrm{TH}_n - \mathrm{TH}_{n+3}$, if $n=6k+5$.
	\end{enumerate}
\end{proposition}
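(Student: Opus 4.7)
The plan is to iterate the Brauer--Shockley formula (\ref{eq-brauer-shockley}) on each of the six telescopic arrangements furnished by Propositions~\ref{prop-aure3} and \ref{prop-aure4}. Write $(a_1,a_2,a_3,a_4)$ for the telescopic sequence (the natural order when $n\equiv r\bmod 6$ with $r\in\{0,1,2,3\}$, the reversed order when $r\in\{4,5\}$), and put $d_i=\gcd\{a_1,\ldots,a_i\}$, so that $d_1=a_1$ and $d_4=1$. A first application of (\ref{eq-brauer-shockley}) with $d=d_3$ gives
\[
{\mathrm F}(a_1,a_2,a_3,a_4) = d_3\,{\mathrm F}\!\left(\frac{a_1}{d_3},\frac{a_2}{d_3},\frac{a_3}{d_3},a_4\right) + (d_3-1)a_4,
\]
and because $a_4\in\langle a_1/d_3,a_2/d_3,a_3/d_3\rangle$ by the telescopic condition, the inner Frobenius number equals that of the triple alone. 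A second application of (\ref{eq-brauer-shockley}), now with $d=d_2/d_3$, reduces it to the two-variable Frobenius number ${\mathrm F}(a_1/d_2,a_2/d_2)$, given by Sylvester's formula. Rearranging yields the closed form
\[
{\mathrm F}(a_1,a_2,a_3,a_4) = -a_1 + \left(\frac{a_1}{d_2}-1\right)a_2 + \left(\frac{d_2}{d_3}-1\right)a_3 + (d_3-1)a_4.
\]

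The next step is the case-by-case evaluation of $d_2$, $d_3$ and $a_1/d_2$. The two-term gcd $d_2=\gcd\{\mathrm{TH}_n,\mathrm{TH}_{n+1}\}$ is read off Lemma~\ref{lem-jc1}, while $d_3=\gcd\{\mathrm{TH}_n,\mathrm{TH}_{n+1},\mathrm{TH}_{n+2}\}$ has already been computed in the proof of Proposition~\ref{prop-aure3} for each of the four direct cases; the analogous quantities for the reversed sequences in cases $r=4,5$ follow by the same procedure, applied to $\gcd\{\mathrm{TH}_{n+2},\mathrm{TH}_{n+3}\}$ and $\gcd\{\mathrm{TH}_{n+1},\mathrm{TH}_{n+2},\mathrm{TH}_{n+3}\}$ via a shifted index in Lemma~\ref{lem-jc1}. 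The ratio $a_1/d_2$ is then immediate from the factorization $\mathrm{TH}_m=(m+2)(m+1)m/6$. Substituting these values into the closed form and translating $k$ back to $n$ produces each of the six items of Proposition~\ref{prop5b}; for instance, when $n=6k$ one has $a_1/d_2=2k$, $d_2/d_3=6k+1$ and $d_3=3k+1$, which give the coefficients $(n-3)/3$, $n$ and $n/2$ of $\mathrm{TH}_{n+1}$, $\mathrm{TH}_{n+2}$, $\mathrm{TH}_{n+3}$ appearing in item~1.

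The main obstacle is essentially administrative: the same two-step reduction must be carried out six times, and the resulting polynomial-in-$k$ coefficients rewritten as the fractional expressions in $n$ printed in the statement. One small subtlety occurs in items~5 and~6 (the cases $r=4,5$), where the telescopic base point $a_1$ is $\mathrm{TH}_{n+3}$ rather than $\mathrm{TH}_n$; this is what produces the $-\mathrm{TH}_{n+3}$ and the reshuffled assignment of coefficients to the remaining three tetrahedral numbers in those items. No ideas beyond Brauer--Shockley and the gcd data already assembled are needed.
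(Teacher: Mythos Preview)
Your proposal is correct and follows exactly the route the paper intends: iterate Brauer--Shockley (\ref{eq-brauer-shockley}) along the telescopic arrangements supplied by Propositions~\ref{prop-aure3} and~\ref{prop-aure4}, using the telescopic condition at each step to drop the redundant generator from the inner Frobenius number. The paper's own proof is the one-sentence remark preceding the statement; your derivation of the closed form ${\mathrm F}(a_1,a_2,a_3,a_4)=-a_1+(a_1/d_2-1)a_2+(d_2/d_3-1)a_3+(d_3-1)a_4$ simply makes that remark explicit, and your sample verification for $n=6k$ checks out.
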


\begin{remark}\label{rem-13}
	From the contents of this section and the previous one, it looks like that the problem becomes longer and longer when we consider the sequences $\Big( {n+m-1 \choose m}, \ldots, {n+2m-1 \choose m} \Big)$ with $m$ increasing (and $n$ fixed). Anyway, it is easy to see that,
	\begin{itemize}
		\item if $n\geq1$, then $\Big( {n+3 \choose 4}, {n+4 \choose 4}, {n+5 \choose 4}, {n+6 \choose 4}, {n+7 \choose 4} \Big)$ is a telescopic sequence if and only if $n\equiv x \bmod 6$ for $x \in \{0,1,2\}$;
		\item if $n\geq9$, then $\Big( {n+7 \choose 4}, {n+6 \choose 4}, {n+5 \choose 4}, {n+4 \choose 4}, {n+3 \choose 4} \Big)$ is telescopic if and only if $n\equiv x \bmod 6$ for $x \in \{3,4,5\}$;
		\item if $n\in\{3,4,5\}$, then both of above sequences are telescopic.
	\end{itemize}
	Therefore, we can give a general formula for the Frobenius problem associated to five consecutive combinatorial numbers given of the type ${n \choose 4}$. (In order to study this case, it is better to consider $n\equiv x \bmod 12$ for $x \in \{0,1,\ldots,11 \}$.)
\end{remark}

\begin{remark}\label{rem-14}	
	Let $n,m$ be positive integers. At this moment, we could conjecture that the sequence $\Big( {n+m-1 \choose m}, \ldots, {n+2m-1 \choose m} \Big)$ is telescopic if and only if the sequence $\Big( {n+2m-1 \choose m}, \ldots, {n+m-1 \choose m} \Big)$ is not telescopic and, consequently, we would have an easy algorithmic process to compute ${\mathrm F}\Big( {n+m-1 \choose m}, \ldots, {n+2m-1 \choose m} \Big)$. Unfortunately, neither $\Big( {12 \choose 5}, \ldots, {17 \choose 5} \Big) = (792,1287,2002,3003,4368,6188)$ nor $(6188,4368,3003,2002,1287,792)$ are telescopic. In fact, all possible permutations of $(792,1287,2002,3003,4368,6188)$ are not telescopic.
\end{remark}

\section{Consequences on numerical semigroups}\label{consequences}

As we comment in the introduction, if $(a_1,\ldots,a_n)$ is a sequence of relatively prime positive integers, then the monoid $\langle a_1,\ldots,a_n \rangle$ is a numerical semigroup. In this section we are interested in those numerical semigroups which are generated by three consecutive triangular numbers or by four consecutive tetrahedral numbers and have embedding dimension equal to three or four, respectively. In fact, having in mind that
\begin{itemize}
	\item $({\mathrm T}_n,{\mathrm T}_{n+1},{\mathrm T}_{n+2})$ is always telescopic,
	\item either $(\mathrm{TH}_n,\mathrm{TH}_{n+1},\mathrm{TH}_{n+2},\mathrm{TH}_{n+3})$ or $(\mathrm{TH}_{n+3},\mathrm{TH}_{n+2},\mathrm{TH}_{n+1},\mathrm{TH}_n)$ is telescopic,
\end{itemize}
we can use the ideas of \cite[Chapter~8]{springer} to obtain several results for the numerical semigroups ${\mathcal T}_n=\langle {\mathrm T}_n,{\mathrm T}_{n+1},{\mathrm T}_{n+2} \rangle$ and $\mathcal{TH}_n=\langle \mathrm{TH}_n,\mathrm{TH}_{n+1},\mathrm{TH}_{n+2},\mathrm{TH}_{n+3} \rangle$.

Firstly, we compute the embedding dimension of ${\mathcal T}_n$ and $\mathcal{TH}_n$.

\begin{lemma}\label{lem-aure41}
	We have that ${\mathrm e}({\mathcal T}_1)=1$, ${\mathrm e}({\mathcal T}_2)=2$, ${\mathrm e}({\mathcal T}_3)=3$ for all $n\geq 3$, ${\mathrm e}(\mathcal{TH}_1)=1$, ${\mathrm e}(\mathcal{TH}_2) = {\mathrm e}(\mathcal{TH}_3)=3$, ${\mathrm e}(\mathcal{TH}_n)=4$ for all $n\geq 4$.
\end{lemma}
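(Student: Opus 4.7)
The plan is to exploit that each generating sequence, $(T_n,T_{n+1},T_{n+2})$ or $(TH_n,\ldots,TH_{n+3})$, is strictly increasing, so a generator is redundant precisely when it lies in the subsemigroup generated by the strictly smaller generators. Each case then reduces to a handful of non-expressibility checks, which I can control using the gcd information already assembled in Sections~\ref{tri-numbers} and \ref{tetra-numbers}.

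For $\mathcal{T}_n$ the cases $n=1,2$ are immediate: $\mathcal{T}_1=\langle 1,3,6\rangle={\mathbb N}$, and $6=2\cdot 3$ collapses $\mathcal{T}_2$ to $\langle 3,10\rangle$, which is minimal since $3\nmid 10$. For $n\geq 3$ I would verify that each of $T_n$, $T_{n+1}$, $T_{n+2}$ is needed: $T_n$ is trivially the smallest; any representation $T_{n+1}=aT_n+bT_{n+2}$ must have $b=0$ because $T_{n+2}>T_{n+1}$, and the estimate $T_n<T_{n+1}<2T_n$ (equivalent to $n>2$) rules out $T_{n+1}=aT_n$; and for $T_{n+2}$ I set $d=\gcd(T_n,T_{n+1})$, note that $d\geq 2$ for $n\geq 2$ by Lemma~\ref{lem1} while Lemma~\ref{lem2} forces $\gcd(d,T_{n+2})=\gcd(T_n,T_{n+1},T_{n+2})=1$, so $d\nmid T_{n+2}$ and $T_{n+2}\notin\langle T_n,T_{n+1}\rangle$.

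For $\mathcal{TH}_n$ the cases $n=1,2,3$ are again handled directly: $\mathcal{TH}_1={\mathbb N}$; in $\mathcal{TH}_2=\langle 4,10,20,35\rangle$ the relation $20=5\cdot 4$ gives $\langle 4,10,35\rangle$, which is minimal by a short check (using that $4\nmid 10$ and that $\gcd(4,10)=2\nmid 35$); in $\mathcal{TH}_3=\langle 10,20,35,56\rangle$ the relation $20=2\cdot 10$ gives $\langle 10,35,56\rangle$, again minimal. For $n\geq 4$ I would prove in turn that $TH_n,\ldots,TH_{n+3}$ are all needed. The smallest is trivial; $TH_{n+1}$ would have to be a multiple of $TH_n$, but $TH_{n+1}/TH_n=(n+3)/n$ is an integer only when $n\mid 3$, excluded for $n\geq 4$; expressibility $TH_{n+2}\in\langle TH_n,TH_{n+1}\rangle$ requires $\gcd(TH_n,TH_{n+1})\mid TH_{n+2}$, which I would refute by comparing the values of $\gcd(TH_n,TH_{n+1})$ from Lemma~\ref{lem-jc1} with the values of $\gcd(TH_n,TH_{n+1},TH_{n+2})$ computed inside the proof of Proposition~\ref{prop-aure3}; and $TH_{n+3}\in\langle TH_n,TH_{n+1},TH_{n+2}\rangle$ requires $\gcd(TH_n,TH_{n+1},TH_{n+2})\mid TH_{n+3}$, but by Lemma~\ref{lem-aure2} the fourfold gcd equals $1$, so as soon as $\gcd(TH_n,TH_{n+1},TH_{n+2})>1$ this divisibility fails.

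The main obstacle will be two exceptional values of $n$ where the clean gcd criterion degenerates: $n=5$, where $\gcd(TH_5,TH_6)=7$ does divide $TH_7=84$, and $n=4$, where $\gcd(TH_4,TH_5,TH_6)=1$. For these I would simply verify by hand that $84=35a+56b$ and $84=20a+35b+56c$ admit no non-negative integer solutions. Modulo these two finite checks, the argument is a routine case analysis on $n\bmod 6$.
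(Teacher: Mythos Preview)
Your proposal is correct and follows essentially the same skeleton as the paper's proof: handle the small cases directly, then for general $n$ show each generator cannot be written using the smaller ones. The paper's argument for $T_{n+2}\notin\langle T_n,T_{n+1}\rangle$ is a direct divisibility check modulo $n+1$ rather than your appeal to Lemmas~\ref{lem1} and~\ref{lem2}, but these are minor variants of the same idea; for the tetrahedral case the paper simply writes ``similar arguments'', so your detailed gcd case analysis (including the two exceptional values $n=4,5$ needing a hand check) actually fills in what the paper omits.
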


\begin{proof}
	It is obvious that
	\begin{itemize}
		\item ${\mathcal T}_1 = \langle 1,3,6 \rangle = \langle 1 \rangle = {\mathbb N}$;
		\item ${\mathcal T}_2 = \langle 3,6,10 \rangle = \langle 3,10 \rangle$;
		\item $\mathcal{TH}_1 = \langle 1,4,10,20 \rangle = \langle 1 \rangle = {\mathbb N}$;
		\item $\mathcal{TH}_2 = \langle 4,10,20,35 \rangle = \langle 4,10,35 \rangle$;
		\item $\mathcal{TH}_3 = \langle 10,20,35,56 \rangle = \langle 10,35,56 \rangle$.
	\end{itemize}
	Thereby, ${\mathrm e}({\mathcal T}_1)=1$, ${\mathrm e}({\mathcal T}_2)=2$, ${\mathrm e}(\mathcal{TH}_1)=1$, and ${\mathrm e}(\mathcal{TH}_2) = {\mathrm e}(\mathcal{TH}_3)=3$.
	
	Now let ${\mathcal T}_n = \langle \frac{n(n+1)}{2}, \frac{(n+1)(n+2)}{2}, \frac{(n+2)(n+3)}{2} \rangle$ with $n\geq 3$. Then, $\frac{n(n+1)}{2} < \frac{(n+1)(n+2)}{2} < \frac{(n+2)(n+3)}{2}$, $\frac{n(n+1)}{2} \nmid \frac{(n+1)(n+2)}{2}$ (that is, $\frac{n(n+1)}{2}$ does not divide $\frac{(n+1)(n+2)}{2}$) and $\frac{n(n+1)}{2} \nmid \frac{(n+2)(n+3)}{2}$.
	
	On the other hand, if we suppose that $\frac{(n+2)(n+3)}{2} \in \langle \frac{n(n+1)}{2}, \frac{(n+1)(n+2)}{2} \rangle$, then there exist $\alpha,\beta \in {\mathbb N}$ such that $\frac{(n+2)(n+3)}{2} = \alpha\frac{n(n+1)}{2} + \beta \frac{(n+1)(n+2)}{2}$. Consequently, $(n+2)(n+3) = \alpha n(n+1) + \beta (n+1)(n+2)$, that is, $n+1$ should be a divisor of $(n+2)(n+3)$, which is a contradiction.
	
	Thus, we conclude that $\left\{ \frac{n(n+1)}{2}, \frac{(n+1)(n+2)}{2}, \frac{(n+2)(n+3)}{2} \right\}$ is a minimal system of generators of ${\mathcal T}_n$ for all $n\geq 3$ and, in consequence, ${\mathrm e}({\mathcal T}_3)=3$ for all $n\geq 3$. 
	
	By using similar arguments, we prove that ${\mathrm e}(\mathcal{TH}_n)=4$ for all $n\geq 4$.
\end{proof}

Since we want to consider general cases, along this section we are going to take ${\mathcal T}_n$ with $n\geq 3$ and $\mathcal{TH}_n$ with $n\geq4$. Thus, we always have that ${\mathrm e}({\mathcal T}_n)=3$ and ${\mathrm e}(\mathcal{TH}_n)=4$. The remaining five cases are left as exercises to the reader.

\subsection{Preliminaries}

The concepts developed below can be generalized to more general frameworks. We refer to \cite{presentations} (and the references therein) for more details.

Let $S$ be a numerical semigroup and let $\{n_1,\ldots,n_e\}$ be the minimal system of generators of $S$. Then we define the \emph{factorization homomorphism associated to $S$}
$$\varphi_S : {\mathbb N}^e \to S, \quad u=(u_1,\ldots,u_e) \mapsto \textstyle{\sum^e_{i=1}}u_in_i.$$
Let us observe that, if $s\in S$, then the cardinality of $\varphi_S^{-1}(s)$ is just the number of factorizations of $s$ in $S$.

We define the \emph{kernel congruence of $\varphi_S$} on ${\mathbb N}^e$ as follows,
$$u \sim_S v \mbox{ if } \varphi_S(u)=\varphi_S(v).$$
It is well known that $S$ is isomorphic to the monoid ${\mathbb N}^e/\!\sim_S$.

Let $\rho \in {\mathbb N}^e \times {\mathbb N}^e$. Then the intersection of all congruences containing $\rho$ is the so-called \emph{congruence generated by $\rho$}. On the other hand, if $\sim$ is the congruence generated by $\rho$, then we say that $\rho$ is a \emph{system of generators} of $\sim$. In this way, a \emph{presentation} of $S$ is a system of generators of $\sim_S$, and a \emph{minimal presentation} of $S$ is a minimal system of generators of $\sim_S$. Let us observe that, for numerical semigroups, the concepts of minimal presentation with respect cardinality and with respect set inclusion coincide (see \cite[Corollary~1.3]{algoritmo-relaciones} or \cite[Corollary~8.13]{springer}).

Since every numerical semigroup $S$ is finitely generated, it follows that $S$ has a minimal presentation with finitely many elements, that is, $S$ is \emph{finitely presented} (see \cite{redei}). In addition, all minimal presentations of a numerical semigroup $S$ have the same cardinality (see \cite[Corollary~1.3]{algoritmo-relaciones} or \cite[Corollary~8.13]{springer}). 

Let us describe an algorithmic process (see \cite{algoritmo-relaciones} or \cite{springer}) to compute all the minimal presentations of $S$.

If $s\in S$, then we define over $\varphi_S^{-1}(s)$ the binary relation ${\mathcal R}_s$ as follows: for $u=(u_1,\ldots,u_e),v=(v_1,\ldots,v_e) \in \varphi_S^{-1}(s)$, we say that $u {\mathcal R}_s v$ if there exists a chain $u_0,u_1,\ldots,u_r \in \varphi_S^{-1}(s)$ such that $u_0=u$, $u_r=v$, and $u_i\cdot u_{i+1}\not= 0$ for all $i\in \{0,\ldots,r-1\}$ (where $\cdot$ is the usual element-wise product of vectors).

If $\varphi_S^{-1}(s)$ has a unique ${\mathcal R}_s$-class, then we take the set $\rho_s=\emptyset$. Otherwise, if ${\mathcal R}_{s,1},\ldots,{\mathcal R}_{s,l}$ are the different classes of $\varphi_S^{-1}(s)$, then we choose $v_i \in {\mathcal R}_{s,i}$ for all $i\in\{1,\ldots,l\}$ and consider $\rho_s$ to be any set of $k-1$ pairs of elements in $V=\{v_1,\ldots,v_l\}$ such that any two elements in $V$ are connected by a sequence of pairs in $\rho_s$ (or their symmetrics).

Finally, the set $\rho = \cup_{s\in S} \rho_s$ is a minimal presentation of $S$.

From the previous comments, it is clear that for each numerical semigroup $S$ there are finitely many elements $s\in S$ such that $\varphi_S^{-1}(s)$ has more than one ${\mathcal R}_s$-class. Precisely, such elements are known as the \emph{Betti elements} of $S$ (see \cite{gs-o}).

\begin{remark}\label{rem-aure39}
	Let us observe that Betti elements are known from a minimal presentation. In fact, the Betti elements of a numerical semigroup $S$ can be computed as the evaluation of the elements of a minimal presentation of $S$.
\end{remark}

Let us define some constants which allow for characterizing the free numerical semigroups and, moreover, getting easily minimal presentations of such numerical semigroups.

\begin{definition}
	Let $S$ be a numerical semigroup with minimal system of generators $\{n_1,\ldots,n_e\}$. For every $i\in \{2,\ldots,e \}$, we define
	$$c^*_i=\min\big\{k\in{\mathbb N} \mid kn_i \in \langle n_1,\ldots,n_{i-1} \rangle \big\}.$$
\end{definition}

For our purposes, we consider the following characterization for free numerical semigroups (see \cite[Proposition~9.15]{springer} and the comment after it).

\begin{lemma}\label{lem-aure43}
	Let $S$ be a numerical semigroup with minimal system of generators $\{n_1,\ldots,n_e\}$. Then $S$ is free (for the arrangement $\{n_1,\ldots,n_e \}$) if and only if $n_1=c^*_2\cdots c^*_e$.
\end{lemma}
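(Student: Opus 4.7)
The plan is to relate the constant $c^*_i$ to the ratio $d_{i-1}/d_i$, where $d_i=\gcd\{n_1,\ldots,n_i\}$, and then multiply these ratios telescopically. The key observation, which I would establish first, is the universal inequality
\[
c^*_i \;\geq\; \frac{d_{i-1}}{d_i} \qquad \text{for every } i\in\{2,\ldots,e\}.
\]
To see this, take any expression $c^*_i n_i = \mu_1 n_1 + \cdots + \mu_{i-1}n_{i-1}$ with $\mu_j\in{\mathbb N}$. The right-hand side is divisible by $d_{i-1}$, so $d_{i-1}\mid c^*_i n_i$, which, after dividing by $d_i$, becomes $(d_{i-1}/d_i)\mid c^*_i(n_i/d_i)$. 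Since $\gcd(d_{i-1}/d_i,\,n_i/d_i)=1$ (this is immediate from $d_i=\gcd\{d_{i-1},n_i\}$), we conclude $(d_{i-1}/d_i)\mid c^*_i$, as claimed.

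Next I would show that equality $c^*_i=d_{i-1}/d_i$ is equivalent to the telescopic condition for the $i$-th step, namely $n_i/d_i\in\langle n_1/d_{i-1},\ldots,n_{i-1}/d_{i-1}\rangle$. Indeed, if $(d_{i-1}/d_i)n_i=\sum\mu_j n_j$ then dividing by $d_{i-1}$ yields the desired expression for $n_i/d_i$; conversely, any such expression, multiplied by $d_{i-1}$, produces an element of $\langle n_1,\ldots,n_{i-1}\rangle$ of the form $(d_{i-1}/d_i)n_i$, forcing $c^*_i\leq d_{i-1}/d_i$, which combined with the universal inequality gives equality.

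With this in hand the proof becomes a one-line telescoping. Since $d_1=n_1$ and $d_e=1$ (the latter because $S$ is a numerical semigroup, so $\gcd\{n_1,\ldots,n_e\}=1$),
\[
\prod_{i=2}^{e}c^*_i \;\geq\; \prod_{i=2}^{e}\frac{d_{i-1}}{d_i} \;=\; \frac{d_1}{d_e} \;=\; n_1,
\]
with equality if and only if $c^*_i=d_{i-1}/d_i$ for every $i$. By the equivalence established in the previous step, this last condition is exactly the telescopic condition for the arrangement $(n_1,\ldots,n_e)$, i.e.\ the freeness of $S$ with respect to that arrangement. Thus $n_1=c^*_2\cdots c^*_e$ characterises freeness, which is the statement of the lemma.

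The only slightly delicate point is the coprimality $\gcd(d_{i-1}/d_i,n_i/d_i)=1$ that makes the divisibility argument in the first step work; beyond that, the proof is a clean bookkeeping exercise. No serious obstacle is expected, since the universal inequality and the telescoping identity together essentially force the result.
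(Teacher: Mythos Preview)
Your argument is correct and complete. The paper does not actually prove this lemma: it is quoted verbatim from \cite[Proposition~9.15]{springer} (and the comment following it), so there is no in-paper proof to compare against. What you have written is precisely the standard proof one finds in that reference: establish $(d_{i-1}/d_i)\mid c^*_i$ via the coprimality $\gcd(d_{i-1}/d_i,\,n_i/d_i)=1$, identify equality $c^*_i=d_{i-1}/d_i$ with the $i$-th telescopic step, and then telescope the product $\prod_{i=2}^{e} d_{i-1}/d_i = d_1/d_e = n_1$. Nothing is missing; the coprimality step you flag as ``slightly delicate'' is indeed the only place one must pay attention, and you handle it correctly.
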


In order to have a minimal presentation of a free numerical semigroup $S$, we use the following result from \cite{springer}.

\begin{lemma}[Corollary 9.18]\label{lem-aure45}
	Let $S$ be a free numerical semigroup for the arrangement of its minimal set of generators $\{n_1,\ldots,n_e \}$. Assume that $c^*_in_i=a_{i_1}n_1+\cdots+a_{i_{i-1}}n_{i-1}$ for some $a_{i_1},\ldots,a_{i_{i-1}} \in {\mathbb N}$. Then $$\left\{(c^*_ix_i,a_{i_1}x_1+\cdots+a_{i_{i-1}}x_{i-1}) \mid i \in \{2,\ldots,e\} \right\}$$ is a minimal presentation of $S$.
\end{lemma}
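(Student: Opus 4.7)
The plan. I would check first that $\rho := \{(c^*_i x_i, a_{i_1} x_1 + \cdots + a_{i_{i-1}} x_{i-1}) : 2 \le i \le e\}$ is contained in $\sim_S$ (immediate from the hypothesis $c^*_i n_i = \sum_{j<i} a_{i_j} n_j$), then show that $\rho$ generates $\sim_S$ as a congruence, and finally verify that $|\rho|=e-1$ is the minimal cardinality. The engine of both nontrivial steps is a normal-form theorem for elements of $S$.

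The normal form. I would prove that every $s \in S$ has a unique representation $s = \sum_i \lambda_i n_i$ with $0 \le \lambda_i < c^*_i$ for $i=2,\ldots,e$. Existence is a terminating descent: at each stage select the largest index $i\ge 2$ with $\lambda_i \ge c^*_i$ and apply the $i$-th pair of $\rho$, which subtracts $c^*_i$ from $\lambda_i$ while modifying only smaller-index coordinates; every step is a $\sim_\rho$-move, and termination follows because the largest violating index strictly decreases. Uniqueness is the crux and exploits freeness via Lemma~\ref{lem-aure43}: writing $d_i = \gcd\{n_1,\ldots,n_i\}$, one has $c^*_i = d_{i-1}/d_i$ and $d_e=1$. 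Suppose two normal-form vectors $(\lambda_i)$ and $(\mu_i)$ have the same image; reducing the equality modulo $d_{e-1}=c^*_e$ kills all $n_i$ with $i<e$ and leaves $(\lambda_e-\mu_e)n_e \equiv 0 \pmod{c^*_e}$. Since $\gcd(n_e,c^*_e) = \gcd(n_e,d_{e-1}) = d_e = 1$, this forces $\lambda_e \equiv \mu_e \pmod{c^*_e}$, and hence $\lambda_e=\mu_e$ from the bounds. Cancelling the $n_e$-term, dividing by $d_{e-1}$, and noting that $(n_1/d_{e-1},\ldots,n_{e-1}/d_{e-1})$ is again telescopic (by the observation following Definition~\ref{telescopic-sequence}), induction on $e$ finishes the argument.

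Generation, minimality, and the main obstacle. Generation of $\sim_S$ by $\rho$ is now automatic: if $\varphi_S(u)=\varphi_S(v)$, the descent above reduces each of $u$ and $v$ via $\sim_\rho$ to the unique normal form of their common image. For minimality, each $c^*_i n_i$ is a Betti element, witnessed by the two factorizations $(0,\ldots,c^*_i,\ldots,0)$ (with $c^*_i$ in position $i$) and $(a_{i_1},\ldots,a_{i_{i-1}},0,\ldots,0)$, whose disjoint supports put them in different $\mathcal{R}_{c^*_i n_i}$-classes; the values $c^*_2 n_2,\ldots,c^*_e n_e$ are pairwise distinct (a direct check using $c^*_i n_i = d_{i-1} n_i/d_i$ and the strict decrease of $d_i$), and the $\rho_s$-construction recalled in the preliminary subsection forces at least one pair in any minimal presentation to sit in the $\rho_\beta$-block of each Betti element $\beta$. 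Hence any minimal presentation has at least $e-1$ pairs, so $|\rho|=e-1$ is minimal. The main obstacle is the uniqueness of the normal form, which pivots on the sharp telescopic identities $d_{i-1}=c^*_i d_i$ and $\gcd(n_i,d_{i-1})=d_i$ that characterize freeness; drop freeness and the argument collapses.
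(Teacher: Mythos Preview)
The paper does not give its own proof of this lemma; it is quoted verbatim as Corollary~9.18 from \cite{springer}. So there is nothing to compare your argument against, and I will simply assess it on its own terms.

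Your normal-form argument (existence by descent, uniqueness via the telescopic identities $c^*_i=d_{i-1}/d_i$ and $\gcd(n_i,d_{i-1})=d_i$) is correct and is exactly the right engine; generation of $\sim_S$ by $\rho$ follows cleanly from it. Two points in the minimality paragraph, however, need attention.

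First, a minor one: ``disjoint supports put them in different $\mathcal R$-classes'' is not a valid inference as stated. Disjoint supports only say $u\cdot v=0$; they could still be $\mathcal R$-related through a longer chain. What is true (and what you implicitly need) is that $c^*_i e_i$ is $\mathcal R$-isolated, which requires an extra argument.

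Second, and more seriously, your claim that the values $c^*_2 n_2,\ldots,c^*_e n_e$ are pairwise distinct is false. Take $S=\langle 6,10,15\rangle$: here $d_1=6$, $d_2=2$, $d_3=1$, so $c^*_2=3$, $c^*_3=2$, and $c^*_2 n_2=c^*_3 n_3=30$. There is a single Betti element, and your count of ``at least $e-1$ distinct Betti elements'' breaks down. The formula $c^*_i n_i=d_{i-1}n_i/d_i$ together with strict decrease of the $d_i$ does not force these products to be distinct.

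The fix for minimality is much simpler than the Betti-element route: once you know $\rho$ generates $\sim_S$, the differences $\{u-v:(u,v)\in\rho\}$ must generate the kernel of $\mathbb Z^e\to\mathbb Z$, $e_j\mapsto n_j$, as a group. That kernel has rank $e-1$, so any presentation has at least $e-1$ pairs, and $|\rho|=e-1$ is therefore minimal.
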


From Remark~\ref{rem-aure39} and Lemma~\ref{lem-aure45}, we can easily compute the Betti elements of a free numerical semigroup $S$.

\begin{lemma}\label{lem-aure40}
	Let $S$ be a free numerical semigroup for the arrangement of its minimal set of generators $\{n_1,\ldots,n_e \}$. Assume that $c^*_in_i=a_{i_1}n_1+\cdots+a_{i_{i-1}}n_{i-1}$ for some $a_{i_1},\ldots,a_{i_{i-1}} \in {\mathbb N}$. Then the Betti elements of $S$ are the numbers $c^*_in_i$ with $i \in \{2,\ldots,e\}$.
\end{lemma}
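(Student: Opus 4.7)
The plan is to combine the explicit minimal presentation given by Lemma~\ref{lem-aure45} with the characterization of Betti elements recalled in Remark~\ref{rem-aure39}. Since $S$ is free for the arrangement $\{n_1,\ldots,n_e\}$, Lemma~\ref{lem-aure45} produces the minimal presentation
$$\rho=\bigl\{\bigl(c^*_i x_i,\; a_{i_1}x_1+\cdots+a_{i_{i-1}}x_{i-1}\bigr) \;\big|\; i \in \{2,\ldots,e\}\bigr\}.$$
This is the only structural input we need; no further computation with $\sim_S$-classes is required.

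Next, I would invoke Remark~\ref{rem-aure39}, which asserts that the Betti elements of any numerical semigroup are obtained by evaluating (under the factorization homomorphism $\varphi_S$) the pairs of a minimal presentation. Concretely, for each $i\in\{2,\ldots,e\}$ the two sides of the $i$-th relation evaluate, by definition of $\varphi_S$, to
$$\varphi_S(c^*_i x_i)=c^*_i n_i \quad \text{and} \quad \varphi_S(a_{i_1}x_1+\cdots+a_{i_{i-1}}x_{i-1})=a_{i_1}n_1+\cdots+a_{i_{i-1}}n_{i-1}.$$
These two values agree because of the hypothesis $c^*_in_i=a_{i_1}n_1+\cdots+a_{i_{i-1}}n_{i-1}$, so the common value $c^*_i n_i$ is the Betti element attached to this relation. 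Ranging over $i\in\{2,\ldots,e\}$ and using that every Betti element arises in this way (and only in this way) from a minimal presentation, we conclude that the set of Betti elements of $S$ is exactly $\{c^*_i n_i \mid i\in\{2,\ldots,e\}\}$.

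There is essentially no hard step: the entire argument is a one-line assembly of Lemma~\ref{lem-aure45} and Remark~\ref{rem-aure39}, together with the evaluation of the two components of each generating pair. The only minor point worth noting is that the list $c^*_2 n_2,\ldots,c^*_e n_e$ is written as a multiset, so possible repetitions among these numbers do not affect the claim, which identifies the Betti elements as the underlying set of these values.
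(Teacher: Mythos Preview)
Your proposal is correct and matches the paper's approach exactly: the paper does not give a formal proof of this lemma but simply states, in the sentence preceding it, that it follows from Remark~\ref{rem-aure39} and Lemma~\ref{lem-aure45}, which is precisely the combination you spell out.
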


Let us recall that, if $S$ is a numerical semigroup and $n\in S\setminus \{0\}$, then the \emph{Ap\'ery set of $n$ in $S$} (see \cite{apery}) is
$$\mathrm{Ap}(S,n) = \left\{ s\in S \mid s-n \notin S \right\} = \left\{0,\omega(1),\ldots,\omega(n-1) \right\},$$
where $\omega(i)$, $1\leq i \leq n-1$, is the least element of $S$ congruent with $i$ modulo $n$. Let us observe that the cardinality of $\mathrm{Ap}(S,n)$ is just equal to $n$. In addition, it is clear that ${\mathrm F}(S)=\max\big(\mathrm{Ap}(S,n)\big)-n$.

When $S$ is a free numerical semigroup for the arrangement $\{n_1,\ldots,n_e \}$, then we can compute explicitly (and easily) the set $\mathrm{Ap}(S,n_1)$. (The next lemma is part of \cite[Lemma~9.15]{springer}, where there is not a detailed proof. By following the ideas exposed in \cite[Lemma~9.14]{springer}, we show a full proof.)

\begin{lemma}\label{lem-aure46}
	Let $S= \langle n_1,\ldots,n_e \rangle$ a free numerical semigroup. Then
	$$\mathrm{Ap}(S,n_1)=\big\{\lambda_2n_2+\cdots+\lambda_en_e \mid \lambda_j\in \{0,\ldots,c^*_j-1\} \mbox{ for all } j \in \{2,\ldots,e\}  \big\}.$$
\end{lemma}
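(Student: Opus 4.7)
The plan is to follow the approach of \cite[Lemma~9.14]{springer} by establishing a unique ``mixed-radix'' normal form for elements of $S$. Writing $d_i = \gcd(n_1,\ldots,n_i)$, the telescopic hypothesis forces $c^*_i = d_{i-1}/d_i$ for $i \in \{2,\ldots,e\}$ (the inequality $c^*_i \leq d_{i-1}/d_i$ is the telescopic relation, and the reverse follows from $\gcd(n_i/d_i, d_{i-1}/d_i)=1$). The product telescopes to $\prod_{j=2}^e c^*_j = d_1/d_e = n_1$, matching $|\mathrm{Ap}(S,n_1)| = n_1$; it therefore suffices to show that $A$ has $n_1$ elements pairwise incongruent modulo $n_1$ and that $a - n_1 \notin S$ for every $a \in A$.

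First I would show that every $s \in S$ admits a decomposition $s = \lambda_1 n_1 + \cdots + \lambda_e n_e$ with $\lambda_1 \in \mathbb{N}$ and $0 \leq \lambda_j < c^*_j$ for $j \geq 2$. Existence is by reduction: start from any expression $s = \sum \alpha_j n_j$ and repeatedly apply the relation $c^*_i n_i = a_{i_1} n_1 + \cdots + a_{i_{i-1}} n_{i-1}$ of Lemma~\ref{lem-aure45}, processing $i = e$ first (pushing $\alpha_e$ below $c^*_e$), then $i = e-1$, and so on. A reduction at index $i$ only modifies coordinates $j < i$, so once a coordinate is normalized it is never disturbed, and the procedure terminates.

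For uniqueness of this normal form it suffices to show that distinct tuples $(\lambda_2,\ldots,\lambda_e)$ in the prescribed ranges yield sums pairwise incongruent modulo $n_1$. Given $\sum_{j \geq 2}\lambda_j n_j \equiv \sum_{j \geq 2}\mu_j n_j \pmod{n_1}$, I would reduce modulo $d_{e-1}$ (which divides $n_1$): every term with $j \leq e-1$ vanishes, leaving $(\lambda_e-\mu_e)n_e \equiv 0 \pmod{d_{e-1}}$. Since $\gcd(n_e, d_{e-1}) = d_e = 1$ and $|\lambda_e - \mu_e| < c^*_e = d_{e-1}$, this forces $\lambda_e = \mu_e$. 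Iterating with moduli $d_{e-2}, d_{e-3}, \ldots, d_1 = n_1$, the same mechanism at step $j$ uses $\gcd(n_j, d_{j-1}) = d_j$ and $|\lambda_j - \mu_j| < c^*_j = d_{j-1}/d_j$ to pin down $\lambda_j = \mu_j$ for $j = e-1, e-2, \ldots, 2$. Applied to two canonical decompositions of the same $s$, this also equates the $\lambda_1$ coefficients.

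The lemma now follows: if $s \in S$ has canonical form $\lambda_1 n_1 + \sum_{j \geq 2}\lambda_j n_j$, then $s - n_1 \in S$ exactly when $\lambda_1 \geq 1$. Indeed, $\lambda_1 \geq 1$ makes this immediate, while if $\lambda_1 = 0$ and $s - n_1 \in S$, writing $s - n_1$ in canonical form and adding $n_1$ back would give a second canonical form of $s$ with coefficient of $n_1$ at least $1$, contradicting uniqueness. Hence $\mathrm{Ap}(S,n_1) = \{s \in S : \lambda_1 = 0\}$, which is precisely $A$. The most delicate point is the iterated modular descent for uniqueness, where the telescopic identities $\gcd(n_i, d_{i-1}) = d_i$ and $c^*_i = d_{i-1}/d_i$ are exactly what makes each reduction tight; this is the heart of why the arrangement being telescopic makes the Ap\'ery set explicit.
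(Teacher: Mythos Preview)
Your argument is correct. You and the paper share the same reduction step (push $\alpha_e$ below $c^*_e$, then $\alpha_{e-1}$, etc.), but you diverge on how to close. The paper stops after establishing the one-sided inclusion $\mathrm{Ap}(S,n_1)\subseteq A$ and then simply counts: by Lemma~\ref{lem-aure43} the right-hand set $A$ has at most $c^*_2\cdots c^*_e=n_1$ elements, while $\mathrm{Ap}(S,n_1)$ has exactly $n_1$, so the inclusion is an equality. You instead prove a full unique-normal-form theorem via the modular descent through $d_{e-1},d_{e-2},\ldots$, and then read off both inclusions from uniqueness. Your route is longer but yields more: it makes the identity $c^*_i=d_{i-1}/d_i$ explicit and shows directly that the tuples indexing $A$ are in bijection with residues modulo $n_1$, without appealing to the characterization $n_1=c^*_2\cdots c^*_e$ as a black box. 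The paper's route is quicker precisely because it is willing to invoke that characterization and let cardinality do the work.
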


\begin{proof}
	Let $\omega(i)$ be a non-zero element of $\mathrm{Ap}(S,n_1)$. Since $\omega(i) \in S$, then there exist $\alpha_1,\ldots,\alpha_e \in {\mathbb N}$ such that $\omega(i)=\alpha_1n_1+\cdots+\alpha_en_e$. From the definition of Ap\'ery set, it is clear that $\alpha_1=0$ and, consequently, $\omega(i)=\alpha_2n_2+\cdots+\alpha_en_e$.
	
	Now, let us take $c^*_e$. Then, $\alpha_e = \gamma_e c^*_e + \delta_e$ with $\gamma_e \in {\mathbb N}$ and $0\leq \delta_e < c^*_e$. And, since $c^*_e=a_{e_1}n_1+\cdots+a_{e_{e-1}}n_{e-1}$ for some $a_{e_1},\ldots,a_{e_{e-1}} \in {\mathbb N}$, then $\omega(i)=\beta_1n_1+\cdots+\beta_{e-1}n_{e-1}+\delta_en_e$ with $\beta_2,\ldots,\beta_{e-1} \in {\mathbb N}$ and $\beta_1=0$ (remember that $\omega(i)\in \mathrm{Ap}(S,n_1)$). Repeating this process with the coefficient of $n_{e-1}$ up to that of $n_2$, we have that $\omega(i)=\delta_2n_2+\cdots+\delta_en_e$ with $0\leq \delta_j < c^*_j$ for all $j\in \{2,\ldots,e\}$.
	
	At this moment, we have that
	\begin{equation}\label{eq-1}
	\mathrm{Ap}(S,n_1) \subseteq \big\{\lambda_2n_2+\cdots+\lambda_en_e \mid \lambda_j\in \{0,\ldots,c^*_j\} \mbox{ for all } j \in \{2,\ldots,e\}  \big\}.
	\end{equation}
	In order to finish the proof, it is enough to observe that the cardinality of $\mathrm{Ap}(S,n_1)$ is $n_1$ and that, from Proposition~\ref{lem-aure43}, the cardinality of the second set in~\ref{eq-1} is least than or equal to $c^*_2\ldots c^*_e=n_1$.
\end{proof}

\subsection{Triangular case}\label{subsect-tri}

Let us take ${\mathcal T}_n=\langle {\mathrm T}_n, {\mathrm T}_{n+1}, {\mathrm T}_{n+2} \rangle$ with $n\geq 3$. We begin computing the values of $c^*_2$ and $c^*_3$ of ${\mathcal T}_n$ (for the arrangement $\{{\mathrm T}_n, {\mathrm T}_{n+1}, {\mathrm T}_{n+2}\}$).

\begin{lemma}\label{lem-aure42}
	Let ${\mathcal T}_n=\langle {\mathrm T}_n, {\mathrm T}_{n+1}, {\mathrm T}_{n+2} \rangle$. Then
	\begin{enumerate}
		\item $c^*_2=n$ and $c^*_3=\frac{n+1}{2}$ if $n$ is odd;
		\item $c^*_2=\frac{n}{2}$ and $c^*_3=n+1$ if $n$ is even.
	\end{enumerate}
\end{lemma}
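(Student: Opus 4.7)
The plan is to split the computation of $c^*_2$ and $c^*_3$ and reduce each to the gcd computations in Lemma~\ref{lem1} and Lemma~\ref{lem2}, using the telescopic property established in Proposition~\ref{prop3}.

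First I would handle $c^*_2$. By definition, $c^*_2$ is the smallest positive integer $k$ such that ${\mathrm T}_n \mid k{\mathrm T}_{n+1}$; this is a standard two-variable fact, giving
\[
c^*_2 \;=\; \frac{{\mathrm T}_n}{\gcd\{{\mathrm T}_n,{\mathrm T}_{n+1}\}}.
\]
Plugging in Lemma~\ref{lem1} immediately yields $c^*_2=n$ when $n$ is odd (since the gcd is $(n+1)/2$) and $c^*_2=n/2$ when $n$ is even (since the gcd is $n+1$).

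Next I would handle $c^*_3$. Set $d=\gcd\{{\mathrm T}_n,{\mathrm T}_{n+1}\}$. For the upper bound, I would invoke the telescopic property of $({\mathrm T}_n,{\mathrm T}_{n+1},{\mathrm T}_{n+2})$ from Proposition~\ref{prop3}, together with $\gcd\{{\mathrm T}_n,{\mathrm T}_{n+1},{\mathrm T}_{n+2}\}=1$ from Lemma~\ref{lem2}: these give ${\mathrm T}_{n+2}\in\langle {\mathrm T}_n/d,\,{\mathrm T}_{n+1}/d\rangle$, and multiplying through by $d$ yields $d\,{\mathrm T}_{n+2}\in\langle {\mathrm T}_n,{\mathrm T}_{n+1}\rangle$, so $c^*_3\le d$. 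For the matching lower bound, suppose $k{\mathrm T}_{n+2}\in\langle {\mathrm T}_n,{\mathrm T}_{n+1}\rangle$; then $d\mid k{\mathrm T}_{n+2}$, and since $\gcd\{d,{\mathrm T}_{n+2}\}=\gcd\{{\mathrm T}_n,{\mathrm T}_{n+1},{\mathrm T}_{n+2}\}=1$ (again Lemma~\ref{lem2}), one concludes $d\mid k$ and hence $c^*_3\ge d$. Thus $c^*_3=d$, and Lemma~\ref{lem1} gives the stated values $(n+1)/2$ or $n+1$ according to the parity of $n$.

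The computation is essentially routine once the groundwork of Section~\ref{tri-numbers} is in place; the only conceptual point is the lower bound for $c^*_3$, where one must exploit the fact that $d$ and ${\mathrm T}_{n+2}$ are coprime in order to pass from $d\mid k{\mathrm T}_{n+2}$ to $d\mid k$. The potential obstacle would have been if $c^*_3$ did not coincide with the naive value $d$, but the telescopic structure guarantees equality. Thus the proof reduces to two applications of Lemma~\ref{lem1} together with Lemma~\ref{lem2} and Proposition~\ref{prop3}, carried out case by case on the parity of $n$.
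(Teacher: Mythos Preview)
Your proof is correct and follows essentially the same route as the paper's. Both arguments reduce the computation of $c^*_2$ to the gcd in Lemma~\ref{lem1}, and both obtain $c^*_3$ by showing (i) $d=\gcd\{{\mathrm T}_n,{\mathrm T}_{n+1}\}$ divides any $k$ with $k{\mathrm T}_{n+2}\in\langle{\mathrm T}_n,{\mathrm T}_{n+1}\rangle$ via the coprimality $\gcd\{d,{\mathrm T}_{n+2}\}=1$, and (ii) $d{\mathrm T}_{n+2}\in\langle{\mathrm T}_n,{\mathrm T}_{n+1}\rangle$. The only cosmetic difference is that the paper re-derives the coprimality and the explicit representation by hand, whereas you invoke Lemma~\ref{lem2} and Proposition~\ref{prop3} directly; your packaging is slightly more economical but the underlying computation is identical.
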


\begin{proof}
	Let $n$ be odd. By definition, there exist $\alpha, \beta \in {\mathbb N}$ such that $c^*_3 {\mathrm T}_{n+2} = \alpha {\mathrm T}_n + \beta {\mathrm T}_{n+1}$, that is,
	$$c^*_3 \frac{(n+2)(n+3)}{2} = \alpha \frac{n(n+1)}{2} + \beta \frac{(n+1)(n+2)}{2}.$$
	From Lemma~\ref{lem1}, we have that $\gcd\left\{\frac{n(n+1)}{2}, \frac{(n+1)(n+2)}{2} \right\} = \frac{n+1}{2}$. Moreover, $\gcd\left\{n+2,\frac{n+1}{2} \right\} = \gcd\left\{\frac{n+3}{2},\frac{n+1}{2} \right\}=1$. Therefore, $c^*_3$ has to be a multiply of $\frac{n+1}{2}$. Since
	$$\frac{n+1}{2} \frac{(n+2)(n+3)}{2} = \frac{n+3}{2} \frac{n(n+1)}{2} + 0 \frac{(n+1)(n+2)}{2},$$
	we conclude that $c^*_3=\frac{n+1}{2}$.
	
	Now, we have that $c^*_2 {\mathrm T}_{n+1} = \alpha {\mathrm T}_n$ for some $\alpha \in {\mathbb N}$. That is, $c^*_2 \frac{(n+1)(n+2)}{2} = \alpha \frac{n(n+1)}{2}$ or, equivalently, $c^*_2(n+2)=\alpha n$. Then, it is obvious that $c^*_2=n$.
	
	Repeating similar arguments, we have the result in the case of even $n$.
\end{proof}

\begin{remark}
	An analogous result is obtained if we consider the arrangement $\{{\mathrm T}_{n+2}, {\mathrm T}_{n+1}, {\mathrm T}_n\}$ for the minimal system of generators of ${\mathcal T}_n$ (see Proposition~\ref{prop3}). In fact,
	\begin{enumerate}
		\item $c^*_2=\frac{n+3}{2}$ and $c^*_3=n+2$ if $n$ is odd;
		\item $c^*_2=n+3$ and $c^*_3=\frac{n+2}{2}$ if $n$ is even.
	\end{enumerate}
\end{remark}

From Lemmas~\ref{lem-aure41}, \ref{lem-aure43}, and \ref{lem-aure42}, we have the next result.

\begin{proposition}\label{prop-aure44}
	${\mathcal T}_n$ is a free numerical semigroup with embedding dimension equal to three.
\end{proposition}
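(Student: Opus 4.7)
The statement has two parts to verify: that the embedding dimension equals three, and that the semigroup is free. The first part is immediate from Lemma~\ref{lem-aure41}, which already establishes ${\mathrm e}({\mathcal T}_n) = 3$ for $n \geq 3$; so the substantive content is the freeness assertion, which I would prove via the characterization in Lemma~\ref{lem-aure43}.

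The plan is to apply Lemma~\ref{lem-aure43} directly to the arrangement $\{{\mathrm T}_n, {\mathrm T}_{n+1}, {\mathrm T}_{n+2}\}$: freeness amounts to the identity $n_1 = c^*_2 \, c^*_3$, i.e. ${\mathrm T}_n = c^*_2 \, c^*_3$. The values of $c^*_2$ and $c^*_3$ are already computed in Lemma~\ref{lem-aure42}, so the proof reduces to a case split on the parity of $n$ and a one-line multiplication in each case.

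Concretely, if $n$ is odd then Lemma~\ref{lem-aure42} gives $c^*_2 = n$ and $c^*_3 = \frac{n+1}{2}$, so
\[
c^*_2 \, c^*_3 = n \cdot \frac{n+1}{2} = \frac{n(n+1)}{2} = {\mathrm T}_n.
\]
If $n$ is even then $c^*_2 = \frac{n}{2}$ and $c^*_3 = n+1$, so
\[
c^*_2 \, c^*_3 = \frac{n}{2} \cdot (n+1) = \frac{n(n+1)}{2} = {\mathrm T}_n.
\]
In either case the hypothesis of Lemma~\ref{lem-aure43} holds, and hence ${\mathcal T}_n$ is free for the arrangement $\{{\mathrm T}_n, {\mathrm T}_{n+1}, {\mathrm T}_{n+2}\}$.

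There is no real obstacle here: all the conceptual work has been done in the earlier lemmas (Proposition~\ref{prop3} guarantees the telescopic structure, Lemma~\ref{lem-aure41} the embedding dimension, and Lemma~\ref{lem-aure42} the explicit values of $c^*_2$ and $c^*_3$), and the present proposition is essentially a bookkeeping step that combines them. The only thing worth flagging is that freeness in the sense of Lemma~\ref{lem-aure43} depends on a specified arrangement of the generators; the argument above uses the arrangement $({\mathrm T}_n, {\mathrm T}_{n+1}, {\mathrm T}_{n+2})$, which is the natural one in view of Proposition~\ref{prop3}. The analogous identity for the reversed arrangement also holds (and can be checked from the remark immediately following Lemma~\ref{lem-aure42}), but it is not needed for the statement as written.
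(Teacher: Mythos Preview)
Your proof is correct and follows exactly the approach the paper uses: the paper simply states that the result follows from Lemmas~\ref{lem-aure41}, \ref{lem-aure43}, and \ref{lem-aure42}, and your argument spells out precisely how those three lemmas combine via the identity ${\mathrm T}_n = c^*_2\,c^*_3$.
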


By combining Proposition~\ref{prop-aure44}, Lemma~\ref{lem-aure45} and the arguments in the proof of Lemma~\ref{lem-aure42}, we get a minimal presentation of ${\mathcal T}_n$. 

\begin{proposition}\label{prop6}
	A minimal presentation of $\,{\mathcal T}_n = \langle {\mathrm T}_n, {\mathrm T}_{n+1}, {\mathrm T}_{n+2} \rangle$ is
	\begin{enumerate}
		\item if $n$ is odd: $\left\{ \big(\frac{n+1}{2}x_3,\frac{n+3}{2}x_2\big), \big(nx_2,(n+2)x_1\big) \right\}$;
		\item if $n$ is even: $\left\{ \big((n+1)x_3,(n+3)x_2\big), \big(\frac{n}{2}x_2,\frac{n+2}{2}x_1\big) \right\}$.
	\end{enumerate}
\end{proposition}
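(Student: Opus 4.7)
My plan is to apply the machinery assembled in the preliminaries and reduce the problem to a short arithmetic verification. By Proposition~\ref{prop-aure44}, ${\mathcal T}_n$ is free for the arrangement $\{{\mathrm T}_n, {\mathrm T}_{n+1}, {\mathrm T}_{n+2}\}$ with embedding dimension three, and Lemma~\ref{lem-aure42} records $c^*_2$ and $c^*_3$ explicitly according to the parity of $n$. Lemma~\ref{lem-aure45} then guarantees that a minimal presentation is obtained from any choice of coefficients $a_{i_1}, \ldots, a_{i_{i-1}} \in {\mathbb N}$ realizing $c^*_i n_i = a_{i_1} n_1 + \cdots + a_{i_{i-1}} n_{i-1}$ for $i \in \{2,3\}$. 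Hence the proof reduces to exhibiting, for each parity of $n$, two explicit such identities matching the pairs in the statement.

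For $n$ odd, I would first verify that $n\,{\mathrm T}_{n+1} = (n+2)\,{\mathrm T}_n$, since both sides equal $\frac{n(n+1)(n+2)}{2}$; this is compatible with $c^*_2 = n$ and produces the pair $\bigl(n x_2, (n+2) x_1\bigr)$. Next I would verify that $\frac{n+1}{2}\,{\mathrm T}_{n+2} = \frac{n+3}{2}\,{\mathrm T}_{n+1}$, since both sides equal $\frac{(n+1)(n+2)(n+3)}{4}$; this is compatible with $c^*_3 = \frac{n+1}{2}$ and yields $\bigl(\frac{n+1}{2} x_3, \frac{n+3}{2} x_2\bigr)$, the key observation being that one is free to take the coefficient of $x_1$ to be zero. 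For $n$ even, the entirely analogous identities $\frac{n}{2}\,{\mathrm T}_{n+1} = \frac{n+2}{2}\,{\mathrm T}_n$ and $(n+1)\,{\mathrm T}_{n+2} = (n+3)\,{\mathrm T}_{n+1}$ fit $c^*_2 = \frac{n}{2}$ and $c^*_3 = n+1$ respectively.

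There is no real obstacle here: all structural content is packaged into Proposition~\ref{prop-aure44} (freeness) and Lemma~\ref{lem-aure45} (template for a minimal presentation). The mild point worth emphasizing is simply that, although Lemma~\ref{lem-aure45} allows $c^*_3 n_3$ to be expressed using both $n_1$ and $n_2$, in both parity cases it already equals a multiple of $n_2$ alone (so $a_{31}=0$ is a valid choice), which is what produces the clean form displayed in the statement. Plugging these explicit coefficients into the template from Lemma~\ref{lem-aure45} recovers exactly the two presentations listed.
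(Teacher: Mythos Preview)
Your proposal is correct and follows essentially the same route as the paper: invoke freeness (Proposition~\ref{prop-aure44}), plug the values of $c^*_2,c^*_3$ from Lemma~\ref{lem-aure42} into the template of Lemma~\ref{lem-aure45}, and verify the two arithmetic identities in each parity case. Your explicit checks of $c^*_3 {\mathrm T}_{n+2}=\frac{n+3}{2}{\mathrm T}_{n+1}$ (odd $n$) and $(n+1){\mathrm T}_{n+2}=(n+3){\mathrm T}_{n+1}$ (even $n$) are exactly what is needed, and in fact are cleaner than the displayed identity in the proof of Lemma~\ref{lem-aure42}, which contains a typo.
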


By applying Lemma~\ref{lem-aure40}, we compute the Betti elements of ${\mathcal T}_n$.

\begin{corollary}\label{cor8}
	The Betti elements of ${\mathcal T}_n$ are
	\begin{enumerate}
		\item $\frac{3}{2}{n+3 \choose 3}$ and $3{n+2 \choose 3}$, if $n$ is odd;
		\item $3{n+3 \choose 3}$ and $\frac{3}{2}{n+2 \choose 3}$, if $n$ is even.
	\end{enumerate}
\end{corollary}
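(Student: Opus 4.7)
The plan is to apply Lemma~\ref{lem-aure40} directly to the free numerical semigroup ${\mathcal T}_n$, whose freeness for the arrangement $\{{\mathrm T}_n,{\mathrm T}_{n+1},{\mathrm T}_{n+2}\}$ has already been established in Proposition~\ref{prop-aure44}. By Lemma~\ref{lem-aure40}, the Betti elements of a free numerical semigroup with arrangement $\{n_1,\ldots,n_e\}$ are exactly the values $c^*_i n_i$ for $i \in \{2,\ldots,e\}$. Here $e=3$, so I only need to compute the two numbers $c^*_2 {\mathrm T}_{n+1}$ and $c^*_3 {\mathrm T}_{n+2}$, using the explicit expressions for $c^*_2, c^*_3$ given by Lemma~\ref{lem-aure42}.

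The two cases then split according to the parity of $n$. For odd $n$, Lemma~\ref{lem-aure42} gives $c^*_2=n$ and $c^*_3=\frac{n+1}{2}$, so the Betti elements are
$$n \cdot \frac{(n+1)(n+2)}{2} = \frac{n(n+1)(n+2)}{2} \quad \text{and} \quad \frac{n+1}{2} \cdot \frac{(n+2)(n+3)}{2} = \frac{(n+1)(n+2)(n+3)}{4}.$$
Rewriting via the identities $\binom{n+2}{3} = \frac{n(n+1)(n+2)}{6}$ and $\binom{n+3}{3} = \frac{(n+1)(n+2)(n+3)}{6}$ converts these into $3\binom{n+2}{3}$ and $\frac{3}{2}\binom{n+3}{3}$, as claimed.

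For even $n$, the same procedure with $c^*_2=\frac{n}{2}$ and $c^*_3=n+1$ yields the Betti elements $\frac{n(n+1)(n+2)}{4}$ and $\frac{(n+1)(n+2)(n+3)}{2}$, which the same binomial identities transform into $\frac{3}{2}\binom{n+2}{3}$ and $3\binom{n+3}{3}$.

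There is no genuine obstacle here: everything structural (freeness, minimal presentation, the fact that Betti elements coincide with evaluations of $c^*_i n_i$) was assembled in the preliminary subsection and in Proposition~\ref{prop-aure44}, so the corollary is purely a restatement of $c^*_i {\mathrm T}_{n+i-1}$ in binomial notation. The only step requiring the slightest care is making sure the parity split is consistent with the indexing in Lemma~\ref{lem-aure42} and that each $c^*_i n_i$ is matched with the correct binomial coefficient.
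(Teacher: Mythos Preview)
Your proof is correct and follows exactly the approach the paper indicates: it simply applies Lemma~\ref{lem-aure40} using the values of $c^*_2$ and $c^*_3$ from Lemma~\ref{lem-aure42}, with the arithmetic for rewriting $c^*_i{\mathrm T}_{n+i-1}$ as multiples of $\binom{n+2}{3}$ and $\binom{n+3}{3}$ made explicit. The paper leaves this as a one-line remark (``By applying Lemma~\ref{lem-aure40}\ldots''), so your argument is just a fleshed-out version of the same idea.
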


\begin{remark}
	Let us observe that the Betti elements of ${\mathcal T}_n$ are given in terms of tetrahedral numbers. An analogue property can be observed in the case of $\mathcal{TH}_n$ (see Corollary~\ref{cor-aure7}): the Betti elements of $\mathcal{TH}_n$ can be expressed in terms of the combinatorial numbers ${p \choose 4}$.
\end{remark}

Finally, taking in Lemma\ref{lem-aure46} the values of $c^*_2$ and $c^*_3$ given in Lemma~\ref{lem-aure42}, we obtain the explicit description of the set $\mathrm{Ap}({\mathcal T}_n,{\mathrm T}_n)$.

\begin{corollary}\label{cor7}
	We have that
	\begin{enumerate}
		\item if $n$ is odd, then
		$$\mathrm{Ap}({\mathcal T}_n,{\mathrm T}_n)=\Bigg\{ a{\mathrm T}_{n+1}+b{\mathrm T}_{n+2} \;\Big|\; a\in\{0,\ldots,n-1\}, \; b\in\left\{0,\ldots,\frac{n-1}{2}\right\} \Bigg\};$$
		\item if $n$ is even, then
		$$\mathrm{Ap}({\mathcal T}_n,{\mathrm T}_n)=\Bigg\{ a{\mathrm T}_{n+1}+b{\mathrm T}_{n+2} \;\Big|\; a\in\left\{0,\ldots,\frac{n-2}{2}\right\}, \; b\in\{0,\ldots,n\} \Bigg\}.$$
	\end{enumerate}
\end{corollary}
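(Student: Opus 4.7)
The proof is essentially a one-step substitution, so the plan is organized around checking that the hypotheses of Lemma~\ref{lem-aure46} are satisfied and that its conclusion, when specialized, reproduces the two displayed sets.

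First I would invoke Proposition~\ref{prop-aure44} to note that $\mathcal{T}_n$ is a free numerical semigroup with respect to the arrangement $\{\mathrm{T}_n,\mathrm{T}_{n+1},\mathrm{T}_{n+2}\}$, taking $n_1 = \mathrm{T}_n$, $n_2 = \mathrm{T}_{n+1}$, $n_3 = \mathrm{T}_{n+2}$. This is what authorizes the use of Lemma~\ref{lem-aure46}, which gives
\[
\mathrm{Ap}(\mathcal{T}_n,\mathrm{T}_n) = \bigl\{\lambda_2\mathrm{T}_{n+1}+\lambda_3\mathrm{T}_{n+2} \;\big|\; \lambda_2 \in \{0,\ldots,c^*_2-1\},\ \lambda_3 \in \{0,\ldots,c^*_3-1\}\bigr\}.
\]

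Next I would plug in the values of $c^*_2$ and $c^*_3$ supplied by Lemma~\ref{lem-aure42}. In the odd case one has $c^*_2 = n$ and $c^*_3 = \frac{n+1}{2}$, so the ranges become $\lambda_2 \in \{0,\ldots,n-1\}$ and $\lambda_3 \in \{0,\ldots,\frac{n-1}{2}\}$, exactly matching part (1). In the even case $c^*_2 = \frac{n}{2}$ and $c^*_3 = n+1$, giving $\lambda_2 \in \{0,\ldots,\frac{n-2}{2}\}$ and $\lambda_3 \in \{0,\ldots,n\}$, exactly matching part (2). Relabeling $\lambda_2,\lambda_3$ as $a,b$ completes the identification.

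There is no real obstacle to overcome here: the work was done in the preliminary subsection (proving Lemma~\ref{lem-aure46}) and in Lemma~\ref{lem-aure42} (computing $c^*_2$ and $c^*_3$). The only tiny point worth mentioning in the write-up is that the index shift $c^*_j - 1$ in the range of each $\lambda_j$ correctly produces the half-open cardinalities $n$ and $\frac{n+1}{2}$ (respectively $\frac{n}{2}$ and $n+1$), whose product equals $\mathrm{T}_n = \frac{n(n+1)}{2}$, so that the set on the right-hand side has the correct cardinality $\mathrm{T}_n$ to be an Apéry set — a sanity check rather than a step of the proof.
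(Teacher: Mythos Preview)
Your proposal is correct and matches the paper's approach exactly: the paper states the corollary as an immediate consequence of substituting the values of $c^*_2$ and $c^*_3$ from Lemma~\ref{lem-aure42} into the general Ap\'ery set description of Lemma~\ref{lem-aure46}, after noting (via Proposition~\ref{prop-aure44}) that $\mathcal{T}_n$ is free for this arrangement. Your cardinality sanity check is a harmless extra observation not present in the paper.
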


\begin{remark}\label{rem-aure5c}
	Having in mind that, if $S$ is a numerical semigroup and $n\in S\setminus \{0\}$, then ${\mathrm F}(S)=\max\big(\mathrm{Ap}(S,n)\big)-n$, we recover Proposition~\ref{prop5} (for $n\geq3$).
\end{remark}

\subsection{Tetrahedral case}\label{subsect-tetra}

We finish with the serie of results relative to the numerical semigroups $\mathcal{TH}_n$ with $n\geq4$. Since the reasonings and tools are similar to the used ones in the triangular case (and some ones already seen in Section~\ref{tetra-numbers}), we omit the proofs.

\begin{lemma}\label{lem-aure51a}
	Let $\mathcal{TH}_n=\langle \mathrm{TH}_n, \mathrm{TH}_{n+1}, \mathrm{TH}_{n+2}, \mathrm{TH}_{n+3} \rangle$, where $n\equiv r \bmod 6$ with $r\in\{0,1,2,3\}$. Then
	\begin{enumerate}
		\item $c^*_2=\frac{n}{3}$, $c^*_3=n+1$, and $c^*_4=\frac{n+2}{2}$ if $n=6k$;
		\item $c^*_2=n$, $c^*_3=\frac{n+1}{2}$, and $c^*_4=\frac{n+2}{3}$ if $n=6k+1$;
		\item $c^*_2=n$, $c^*_3=\frac{n+1}{3}$, and $c^*_4=\frac{n+2}{2}$ if $n=6k+2$;
		\item $c^*_2=\frac{n}{3}$, $c^*_3=\frac{n+1}{2}$, and $c^*_4=n+2$ if $n=6k+3$.
	\end{enumerate}
\end{lemma}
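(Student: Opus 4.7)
The plan is to reduce the computation of $c^*_2, c^*_3, c^*_4$ to the ratios of the consecutive gcds $d_1, d_2, d_3, d_4$ (with $d_j=\gcd\{\mathrm{TH}_n,\ldots,\mathrm{TH}_{n+j-1}\}$), all of which have already been determined either in Lemma~\ref{lem-jc1} or in the proof of Proposition~\ref{prop-aure3}. Concretely, I claim that, whenever the arrangement $(a_1,a_2,a_3,a_4)=(\mathrm{TH}_n,\mathrm{TH}_{n+1},\mathrm{TH}_{n+2},\mathrm{TH}_{n+3})$ is telescopic, we have
\begin{equation*}
c^*_i=\frac{d_{i-1}}{d_i}\quad\text{for }i\in\{2,3,4\}.
\end{equation*}
The inclusion $c^*_i\geq d_{i-1}/d_i$ is forced, since $c^*_i a_i\in\langle a_1,\ldots,a_{i-1}\rangle$ implies $d_{i-1}\mid c^*_i a_i$, and $\gcd(d_{i-1},a_i)=d_i$ gives $(d_{i-1}/d_i)\mid c^*_i$. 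The reverse inequality is exactly the telescopic condition of Definition~\ref{telescopic-sequence}: it states that $a_i/d_i\in\langle a_1/d_{i-1},\ldots,a_{i-1}/d_{i-1}\rangle$, and clearing denominators yields $(d_{i-1}/d_i)\,a_i\in\langle a_1,\ldots,a_{i-1}\rangle$.

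Given this reduction, it suffices to assemble the values of $d_1,d_2,d_3,d_4$ in each case. Here $d_1=\mathrm{TH}_n$ and, by Lemma~\ref{lem-aure2}, $d_4=1$. The value of $d_2$ is supplied by Lemma~\ref{lem-jc1}, while $d_3$ was computed explicitly inside the proof of Proposition~\ref{prop-aure3} (namely $d_3=3k+1,\,2k+1,\,3k+2,\,6k+5$ for $n=6k,6k+1,6k+2,6k+3$ respectively). Writing $\mathrm{TH}_n=\frac{n(n+1)(n+2)}{6}$ and factoring the numerator according to the residue of $n$ modulo $6$, each ratio $d_{i-1}/d_i$ collapses to a single factor. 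For instance, when $n=6k$,
\begin{equation*}
d_1=\mathrm{TH}_{6k}=k(6k+1)(6k+2),\quad d_2=(6k+1)(3k+1),\quad d_3=3k+1,\quad d_4=1,
\end{equation*}
giving $c^*_2=d_1/d_2=2k=n/3$, $c^*_3=d_2/d_3=6k+1=n+1$, and $c^*_4=d_3/d_4=3k+1=(n+2)/2$. The analogous factorizations for $n=6k+1,6k+2,6k+3$ are routine and yield the stated values.

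The only step requiring thought is the inequality $c^*_i\leq d_{i-1}/d_i$, but this is immediate from the telescopic property established in Proposition~\ref{prop-aure3}; the rest is a bookkeeping exercise once the prime factorizations $\mathrm{TH}_{6k+r}=\frac{(6k+r)(6k+r+1)(6k+r+2)}{6}$ are written out. Hence I expect no genuine obstacle: the proof consists in quoting the relation $c^*_i=d_{i-1}/d_i$, invoking Lemma~\ref{lem-jc1} and the proof of Proposition~\ref{prop-aure3}, and verifying the four arithmetic identities.
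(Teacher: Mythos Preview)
Your proposal is correct. The paper omits the proof of this lemma, pointing instead to the triangular analogue (Lemma~\ref{lem-aure42}) and to the computations already carried out in Section~\ref{tetra-numbers}. In that template the paper argues each $c^*_i$ separately: a divisibility argument forces $c^*_i$ to be a multiple of the relevant gcd ratio, and then an explicit non-negative integer combination (the same one exhibited when checking the telescopic property) shows equality.

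Your route is the same argument distilled into a single clean statement: for any telescopic arrangement, $c^*_i=d_{i-1}/d_i$. The lower bound is exactly the paper's divisibility step, and the upper bound is the telescopic condition of Definition~\ref{telescopic-sequence}, already verified in Proposition~\ref{prop-aure3}. This packaging is slightly more conceptual than the paper's case-by-case treatment and has the advantage that no new explicit representations need to be written down --- you simply read off the $d_i$ from Lemma~\ref{lem-jc1} and the proof of Proposition~\ref{prop-aure3} and divide. The arithmetic you display for $n=6k$ is correct, and the remaining three residues are indeed routine.
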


\begin{lemma}\label{lem-aure51b}
	Let $\mathcal{TH}_n=\langle \mathrm{TH}_{n+3}, \mathrm{TH}_{n+2}, \mathrm{TH}_{n+1}, \mathrm{TH}_n \rangle$, where $n\equiv r \bmod 6$ with $r\in\{4,5\}$. Then
	\begin{enumerate}
		\item $c^*_2=\frac{n+5}{3}$, $c^*_3=\frac{n+4}{2}$, and $c^*_4=n+3$ if $n=6k+4$;
		\item $c^*_2=n+5$, $c^*_3=\frac{n+4}{3}$, and $c^*_4=\frac{n+3}{2}$ if $n=6k+5$.
	\end{enumerate}
\end{lemma}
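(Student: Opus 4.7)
The plan is to mimic the proof of Lemma~\ref{lem-aure42}, the only genuine novelty being that we now have four generators so three values $c^*_2,c^*_3,c^*_4$ must be computed in each of the two residue classes $n=6k+4$ and $n=6k+5$. The key structural input is Proposition~\ref{prop-aure4}, which guarantees that the sequence $(\mathrm{TH}_{n+3},\mathrm{TH}_{n+2},\mathrm{TH}_{n+1},\mathrm{TH}_n)$ is telescopic for these $n$. Hence $\mathcal{TH}_n$ is free for this arrangement, and each $c^*_i$ is exactly $d_{i-1}/d_i$ with $d_i=\gcd\{n_1,\ldots,n_i\}$; this reduces the lemma to three gcd computations per case.

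First I would factor each tetrahedral number via $\mathrm{TH}_m=m(m+1)(m+2)/6$. For $n=6k+4$ this displays
$\mathrm{TH}_{n+3}=(6k+7)(3k+4)(2k+3)$,
$\mathrm{TH}_{n+2}=2(6k+7)(3k+4)(k+1)$,
$\mathrm{TH}_{n+1}=(6k+7)(k+1)(6k+5)$,
$\mathrm{TH}_{n}=(k+1)(6k+5)(2k+3)$,
and for $n=6k+5$ analogous factorizations in terms of $k+1$, $2k+3$, $3k+4$, $3k+5$, $6k+5$, $6k+7$. With these factorizations the gcds $d_2$ and $d_3$ follow either from Lemma~\ref{lem-jc1} (with a shifted index, e.g.\ taking $k\mapsto k+1$ for $\gcd\{\mathrm{TH}_{n+2},\mathrm{TH}_{n+3}\}$ when $n=6k+4$) or directly from the factored form.

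Next, I would carry out the coprimality verifications that remain, each of which is a one-line linear identity: $\gcd\{2k+3,2(k+1)\}=1$ (since $2k+3=2(k+1)+1$), $\gcd\{3k+4,k+1\}=1$ (since $3k+4=3(k+1)+1$), $\gcd\{3k+4,6k+5\}=1$ (since $6k+5=2(3k+4)-3$ and $\gcd\{3k+4,3\}=1$), and $\gcd\{6k+7,2(3k+5)\}=1$ for the $n=6k+5$ block. These feed directly into the gcd chains to yield, for $n=6k+4$, the tower $d_2=(6k+7)(3k+4)$, $d_3=6k+7$, $d_4=1$ (the last from Lemma~\ref{lem-aure2}), and the stated $c^*_2=2k+3=(n+5)/3$, $c^*_3=3k+4=(n+4)/2$, $c^*_4=6k+7=n+3$; and for $n=6k+5$, $d_2=(3k+4)(2k+3)$, $d_3=3k+4$, $d_4=1$, giving $c^*_2=n+5$, $c^*_3=(n+4)/3$, $c^*_4=(n+3)/2$.

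The main obstacle is purely bookkeeping: one must correctly identify which linear factor is shared by each pair of consecutive tetrahedral numbers and then verify coprimality of the leftover factors. No step is conceptually hard once Proposition~\ref{prop-aure4} has been invoked, which is precisely why the authors felt entitled to omit the proof.
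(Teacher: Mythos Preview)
Your approach is correct and matches the paper's intended method: the authors omit the proof, pointing back to the computation in Lemma~\ref{lem-aure42} and the gcd data already assembled in Section~\ref{tetra-numbers}, which is precisely the route you take (the identity $c^*_i=d_{i-1}/d_i$ for a telescopic arrangement is exactly what the argument in Lemma~\ref{lem-aure42} establishes case by case). One harmless slip: for $n=6k+4$ the factorization of $\mathrm{TH}_n$ is $2(k+1)(6k+5)(3k+2)$, not $(k+1)(6k+5)(2k+3)$, but since you obtain $d_4=1$ from Lemma~\ref{lem-aure2} rather than from this factorization, the error does not propagate.
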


\begin{proposition}\label{prop-aure52}
	$\mathcal{TH}_n$ is a free numerical semigroup with embedding dimension equal to four.
\end{proposition}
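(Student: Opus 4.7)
The plan is to invoke the characterization from Lemma~\ref{lem-aure43}: for a numerical semigroup $S$ with minimal system of generators $\{n_1,\ldots,n_e\}$ arranged in a specific order, $S$ is free for that arrangement if and only if $n_1 = c^*_2\cdots c^*_e$. Since Lemma~\ref{lem-aure41} already guarantees ${\mathrm e}(\mathcal{TH}_n)=4$ for all $n\geq 4$, it only remains to verify the product identity in each residue class modulo $6$, using the values of $c^*_2, c^*_3, c^*_4$ supplied by Lemmas~\ref{lem-aure51a} and \ref{lem-aure51b}.

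Concretely, I would split into the six cases $n\equiv r \bmod 6$. For $r\in\{0,1,2,3\}$, the relevant arrangement is $\{\mathrm{TH}_n, \mathrm{TH}_{n+1}, \mathrm{TH}_{n+2}, \mathrm{TH}_{n+3}\}$, so I must check that $c^*_2 c^*_3 c^*_4 = \mathrm{TH}_n = \frac{n(n+1)(n+2)}{6}$. Plugging in the values from Lemma~\ref{lem-aure51a}, each of the four products telescopes to $\frac{n(n+1)(n+2)}{6}$ (the factor of $3$ in one denominator and the factor of $2$ in another combine with the remaining integer factor to reproduce $\mathrm{TH}_n$). For $r\in\{4,5\}$, the telescopic arrangement (cf.\ Proposition~\ref{prop-aure4}) is reversed, so $n_1 = \mathrm{TH}_{n+3} = \frac{(n+3)(n+4)(n+5)}{6}$, and I would verify using Lemma~\ref{lem-aure51b} that the product $c^*_2 c^*_3 c^*_4$ again equals this value by the same kind of denominator--numerator cancellation.

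Once the six identities are confirmed, Lemma~\ref{lem-aure43} immediately yields that $\mathcal{TH}_n$ is free for the appropriate arrangement in each case, completing the proof. There is no serious obstacle here: the embedding dimension is already known, the $c^*_i$ are already computed, and the remaining work is a one-line arithmetic check per case. The only thing to be careful about is to use the \emph{correct} arrangement of generators in the two residue classes $r\in\{4,5\}$, since in those cases the original ordering $(\mathrm{TH}_n,\ldots,\mathrm{TH}_{n+3})$ is not telescopic and freeness holds only for the reversed arrangement.
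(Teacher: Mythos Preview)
Your proposal is correct and mirrors exactly the approach the paper intends: it omits the proof, pointing to the triangular analogue Proposition~\ref{prop-aure44}, which is obtained precisely by combining Lemmas~\ref{lem-aure41}, \ref{lem-aure43}, and the computed values of the $c^*_i$. Your six arithmetic checks (that $c^*_2 c^*_3 c^*_4$ equals $\mathrm{TH}_n$ for $r\in\{0,1,2,3\}$ and $\mathrm{TH}_{n+3}$ for $r\in\{4,5\}$) are the straightforward verifications the paper leaves to the reader.
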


\begin{proposition}\label{prop-aure5}
	A minimal presentation of $\mathcal{TH}_n$ is
	\begin{enumerate}
		\item if $n=6k$: $${\textstyle \left\{ \Big(\frac{n+2}{2}x_4,2x_3+\frac{n+4}{2}x_2\Big), \big((n+1)x_3,(n+4)x_2\big), \Big(\frac{n}{3}x_2,\frac{n+3}{3}x_1\Big) \right\};}$$
		
		\item if $n=6k+1$: $${\textstyle \left\{ \Big(\frac{n+2}{3}x_4,\frac{n+5}{3}x_3\Big), \Big(\frac{n+1}{2}x_3,2x_2+\frac{n+3}{2}x_1\Big), \big(nx_2,(n+3)x_1\big) \right\};}$$
		
		\item if $n=6k+2$: $${\textstyle \left\{ \Big(\frac{n+2}{2}x_4,2x_3+\frac{n+4}{2}x_2\Big), \Big(\frac{n+1}{3}x_3,\frac{n+4}{3}x_2\Big), \big(nx_2,(n+3)x_1\big) \right\};}$$
		
		\item if $n=6k+3$: $${\textstyle 4\left\{ \big((n+2)x_4,(n+5)x_3\big), \Big(\frac{n+1}{2}x_3,2x_2+\frac{n+3}{2}x_1\Big), \Big(\frac{n}{3}x_2,\frac{n+3}{3}x_1\Big) \right\};}$$
		
		\item if $n=6k+4$: $${\textstyle \left\{ \big((n+3)x_1,nx_2\big), \Big(\frac{n+4}{2}x_2,\frac{n-1}{3}x_3+\frac{n+2}{6}x_4\Big), \Big(\frac{n+5}{3}x_3,\frac{n+2}{3}x_4\Big) \right\}};$$
		
		\item if $n=6k+5$: $${\textstyle \left\{ \Big(\frac{n+3}{2}x_1,\frac{n-2}{3}x_2+\frac{n+1}{6}x_3\Big), \Big(\frac{n+4}{3}x_2,\frac{n+1}{3}x_3\Big), \big((n+5)x_3,(n+2)x_4\big) \right\}.}$$
	\end{enumerate}
\end{proposition}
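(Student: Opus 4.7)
The strategy is to apply Lemma~\ref{lem-aure45} in each of the six residue classes modulo $6$, splitting according to which arrangement of the generators of $\mathcal{TH}_n$ is telescopic. By Proposition~\ref{prop-aure52} (together with Propositions~\ref{prop-aure3} and~\ref{prop-aure4}), $\mathcal{TH}_n$ is free with respect to the arrangement $\{\mathrm{TH}_n,\mathrm{TH}_{n+1},\mathrm{TH}_{n+2},\mathrm{TH}_{n+3}\}$ when $n\equiv r\bmod 6$ for $r\in\{0,1,2,3\}$, and with respect to the arrangement $\{\mathrm{TH}_{n+3},\mathrm{TH}_{n+2},\mathrm{TH}_{n+1},\mathrm{TH}_n\}$ when $r\in\{4,5\}$. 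The corresponding values of $c^*_2,c^*_3,c^*_4$ are supplied by Lemmas~\ref{lem-aure51a} and~\ref{lem-aure51b}.

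To apply Lemma~\ref{lem-aure45} one also needs the explicit non-negative integer coefficients $a_{i_j}$ in the relations $c^*_i n_i=a_{i_1}n_1+\cdots+a_{i_{i-1}}n_{i-1}$. These are exactly the coefficients already written down in the step-by-step verification of telescopicity in the proof of Proposition~\ref{prop-aure3} (and its analogue for Proposition~\ref{prop-aure4}). For example, in case $n=6k$, that proof records $(3k+1)\mathrm{TH}_{n+3}=(3k+2)\mathrm{TH}_{n+1}+2\mathrm{TH}_{n+2}$, which is precisely $c^*_4 n_4=a_{4_2}n_2+a_{4_3}n_3$ with $a_{4_1}=0$; similarly, the relation at the next level of the telescoping reads $(6k+1)\mathrm{TH}_{n+2}=2(3k+2)\mathrm{TH}_{n+1}$, i.e.\ $c^*_3 n_3=a_{3_2}n_2$ with $a_{3_1}=0$; and at the last level $c^*_2 n_2=a_{2_1}n_1$ is immediate from the definition of $c^*_2$ once one computes $\mathrm{TH}_{n+1}/\mathrm{TH}_n=(2k+1)/2k\cdot(\text{unit factor})$, giving $a_{2_1}=\frac{n+3}{3}$. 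Substituting $n=6k$ back into these coefficients reproduces the displayed pair $\bigl(\tfrac{n+2}{2}x_4,\,2x_3+\tfrac{n+4}{2}x_2\bigr),\bigl((n+1)x_3,(n+4)x_2\bigr),\bigl(\tfrac{n}{3}x_2,\tfrac{n+3}{3}x_1\bigr)$. The remaining five cases proceed by the same three-step reduction, and each routinely reproduces the stated pairs after re-expressing $k$ in terms of $n$.

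The only real obstacle is bookkeeping rather than mathematics. In cases $n\equiv 4,5\bmod 6$ the free arrangement is reversed, but in the statement of Proposition~\ref{prop-aure5} the labels $x_1,x_2,x_3,x_4$ are kept attached to $\mathrm{TH}_n,\mathrm{TH}_{n+1},\mathrm{TH}_{n+2},\mathrm{TH}_{n+3}$; consequently the pair produced by Lemma~\ref{lem-aure45} at position $i$ (with respect to the reversed arrangement) involves the variable $x_{5-i}$ rather than $x_i$. This accounts for the visual ``flip'' of items (5) and (6) relative to items (1)–(4) and is the only subtlety worth flagging. Once this convention is fixed, each of the six cases is a direct substitution of the values from Lemmas~\ref{lem-aure51a} and~\ref{lem-aure51b} and the explicit coefficients extracted from the proofs of Propositions~\ref{prop-aure3} and~\ref{prop-aure4} into Lemma~\ref{lem-aure45}.
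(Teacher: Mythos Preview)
Your proposal is correct and follows precisely the approach the paper indicates: the paper omits the proof entirely, stating only that ``the reasonings and tools are similar to the used ones in the triangular case (and some ones already seen in Section~\ref{tetra-numbers})'', i.e., combine freeness (Proposition~\ref{prop-aure52}), Lemma~\ref{lem-aure45}, the values of $c^*_i$ from Lemmas~\ref{lem-aure51a}--\ref{lem-aure51b}, and the explicit coefficients recorded in the proof of Proposition~\ref{prop-aure3} (and its analogue for Proposition~\ref{prop-aure4}). Your remark about the variable relabelling $x_i\leftrightarrow x_{5-i}$ in cases $n\equiv 4,5\bmod 6$ is exactly the right bookkeeping observation and is the only point requiring care.
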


\begin{corollary}\label{cor-aure7}
	The Betti elements of $\mathcal{TH}_n$ are
	\begin{enumerate}
		\item $2{n+5 \choose 4}$, $4{n+4 \choose 4}$ and $\,\frac{4}{3}{n+3 \choose 4}$, if $n=6k$;
		\item $\frac{4}{3}{n+5 \choose 4}$, $2{n+4 \choose 4}$ and $\,4{n+3 \choose 4}$, if $n=6k+1$;
		\item $2{n+5 \choose 4}$, $\frac{4}{3}{n+4 \choose 4}$ and $\,4{n+3 \choose 4}$, if $n=6k+2$;
		\item $4{n+5 \choose 4}$, $2{n+4 \choose 4}$ and $\,\frac{4}{3}{n+3 \choose 4}$, if $n=6k+3$;
		\item $\frac{4}{3}{n+5 \choose 4}$, $2{n+4 \choose 4}$ and $\,4{n+3 \choose 4}$, if $n=6k+4$;
		\item $4{n+5 \choose 4}$, $\frac{4}{3}{n+4 \choose 4}$ and $\,2{n+3 \choose 4}$, if $n=6k+5$.
	\end{enumerate}
\end{corollary}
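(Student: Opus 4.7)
\emph{Plan.} The statement is essentially a direct computation once one knows that $\mathcal{TH}_n$ is free (Proposition~\ref{prop-aure52}) and that, for a free numerical semigroup, the Betti elements are exactly the products $c^*_i n_i$ for $i\in\{2,\ldots,e\}$ (Lemma~\ref{lem-aure40}). So the whole proof reduces to substituting the values of $c^*_2,c^*_3,c^*_4$ provided by Lemma~\ref{lem-aure51a} (for $n\equiv 0,1,2,3\bmod 6$, with the ordering $\mathrm{TH}_n,\mathrm{TH}_{n+1},\mathrm{TH}_{n+2},\mathrm{TH}_{n+3}$) or Lemma~\ref{lem-aure51b} (for $n\equiv 4,5\bmod 6$, with the reversed ordering $\mathrm{TH}_{n+3},\mathrm{TH}_{n+2},\mathrm{TH}_{n+1},\mathrm{TH}_n$) and rewriting each product in the claimed binomial form.

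The only algebraic ingredient is the elementary identity
\[
\binom{m+3}{4}=\frac{m}{4}\binom{m+3}{3},
\qquad\text{equivalently}\qquad
\binom{m+3}{3}=\frac{4}{m}\binom{m+3}{4},
\]
which lets one absorb a linear factor from $c^*_i$ into the tetrahedral number $\mathrm{TH}_{m+1}=\binom{m+3}{3}$ so as to produce a multiple of $\binom{m+3}{4}$. For instance, in the case $n=6k$ one has $c^*_2=n/3$ and $n_2=\mathrm{TH}_{n+1}=\binom{n+3}{3}$, and
\[
c^*_2\,n_2=\tfrac{n}{3}\binom{n+3}{3}=\tfrac{n}{3}\cdot\tfrac{4}{n}\binom{n+3}{4}=\tfrac{4}{3}\binom{n+3}{4};
\]
the analogous identity $\binom{m+4}{3}=\frac{4}{m+1}\binom{m+4}{4}$ (shifted appropriately) handles $c^*_3 n_3$ and $c^*_4 n_4$. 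I would simply carry out this one-line simplification in each of the six congruence classes.

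There is no genuine obstacle beyond bookkeeping: one must be careful, for $r\in\{4,5\}$, that the ``small'' generator is now $\mathrm{TH}_{n+3}$ (not $\mathrm{TH}_n$), so that $n_2,n_3,n_4$ are $\mathrm{TH}_{n+2},\mathrm{TH}_{n+1},\mathrm{TH}_n$ in that order; the binomial identity still applies verbatim with the index shifts. Because the calculation is mechanical and essentially identical to the one already performed to prove Proposition~\ref{prop-aure52} and to write down Proposition~\ref{prop-aure5}, the authors' comment that ``the reasonings and tools are similar to the used ones in the triangular case'' and the omission of the proof are justified: the corollary follows by combining Proposition~\ref{prop-aure52}, Lemma~\ref{lem-aure40}, and Lemmas~\ref{lem-aure51a}--\ref{lem-aure51b} case by case.
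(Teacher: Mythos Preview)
Your proposal is correct and matches the paper's (omitted) argument exactly: the corollary is obtained by applying Lemma~\ref{lem-aure40} to the free semigroup $\mathcal{TH}_n$ with the $c^*_i$ values from Lemmas~\ref{lem-aure51a}--\ref{lem-aure51b}, and then rewriting each $c^*_i\,\mathrm{TH}_j$ via the identity $\binom{m+3}{4}=\frac{m}{4}\binom{m+3}{3}$. Your explicit handling of the reversed ordering for $r\in\{4,5\}$ is the only point requiring care, and you address it correctly.
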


\begin{corollary}\label{cor-aure6}
	We have that
	\begin{enumerate}
		\item if $n=6k$, then
		$$\mathrm{Ap}(\mathcal{TH}_n,\mathrm{TH}_n)=\Bigg\{ a\mathrm{TH}_{n+1}+b\mathrm{TH}_{n+2}+c\mathrm{TH}_{n+3} \;\Big|\; a\in\left\{0,\ldots,\frac{n-3}{3}\right\},$$ 
		$$b\in\{0,\ldots,n\}, \; c\in\left\{0,\ldots,\frac{n}{2}\right\} \Bigg\};$$
		\item if $n=6k+1$, then
		$$\mathrm{Ap}(\mathcal{TH}_n,\mathrm{TH}_n)=\Bigg\{ a\mathrm{TH}_{n+1}+b\mathrm{TH}_{n+2}+c\mathrm{TH}_{n+3} \;\Big|\; a\in\{0,\ldots,n-1\},$$ 
		$$b\in\left\{0,\ldots,\frac{n-1}{2}\right\}, \; c\in\left\{0,\ldots,\frac{n-1}{3}\right\} \Bigg\};$$
		\item if $n=6k+2$, then
		$$\mathrm{Ap}(\mathcal{TH}_n,\mathrm{TH}_n)=\Bigg\{ a\mathrm{TH}_{n+1}+b\mathrm{TH}_{n+2}+c\mathrm{TH}_{n+3} \;\Big|\; a\in\{0,\ldots,n-1\},$$ 
		$$b\in\left\{0,\ldots,\frac{n-2}{3}\right\}, \; c\in\left\{0,\ldots,\frac{n}{2}\right\} \Bigg\};$$
		\item if $n=6k+3$, then
		$$\mathrm{Ap}(\mathcal{TH}_n,\mathrm{TH}_n)=\Bigg\{ a\mathrm{TH}_{n+1}+b\mathrm{TH}_{n+2}+c\mathrm{TH}_{n+3} \;\Big|\; a\in\left\{0,\ldots,\frac{n-3}{3}\right\},$$ 
		$$b\in\left\{0,\ldots,\frac{n-1}{2}\right\}, \; c\in\{0,\ldots,n+1\} \Bigg\};$$
		\item if $n=6k+4$, then
		$$\mathrm{Ap}(\mathcal{TH}_n,\mathrm{TH}_{n+3})=\Bigg\{ a\mathrm{TH}_n+b\mathrm{TH}_{n+1}+c\mathrm{TH}_{n+2} \;\Big|\; a\in\{0,\ldots,n+2\},$$ 
		$$b\in\left\{0,\ldots,\frac{n+2}{2}\right\}, \; c\in\left\{0,\ldots,\frac{n+2}{3}\right\} \Bigg\};$$
		\item if $n=6k+5$, then
		$$\mathrm{Ap}(\mathcal{TH}_n,\mathrm{TH}_{n+3})=\Bigg\{ a\mathrm{TH}_n+b\mathrm{TH}_{n+1}+c\mathrm{TH}_{n+2} \;\Big|\; a\in\left\{0,\ldots,\frac{n+1}{2}\right\},$$ 
		$$b\in\left\{0,\ldots,\frac{n+1}{3}\right\}, \; c\in\{0,\ldots,n+4\} \Bigg\}.$$
	\end{enumerate}
\end{corollary}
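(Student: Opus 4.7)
The proof of Corollary \ref{cor-aure6} is a direct application of Lemma \ref{lem-aure46}, once we know which telescopic arrangement of the generators to use and what the constants $c^*_2, c^*_3, c^*_4$ are. Since $\mathcal{TH}_n$ is a free numerical semigroup (Proposition \ref{prop-aure52}), Lemma \ref{lem-aure46} tells us that the Apéry set with respect to the first generator $n_1$ in a telescopic arrangement is exactly
\[
\mathrm{Ap}(\mathcal{TH}_n, n_1) = \bigl\{\lambda_2 n_2 + \lambda_3 n_3 + \lambda_4 n_4 \bigm| 0 \leq \lambda_j \leq c^*_j - 1 \text{ for } j=2,3,4\bigr\}.
\]
Therefore, the plan is simply to apply this formula case by case, reading the $c^*_j$'s off Lemma \ref{lem-aure51a} or Lemma \ref{lem-aure51b} according to the residue of $n$ modulo $6$.

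The first observation to make explicit is which arrangement is telescopic in each case (this has already been settled by Propositions \ref{prop-aure3} and \ref{prop-aure4}). For $n = 6k, 6k+1, 6k+2, 6k+3$, the sequence $(\mathrm{TH}_n, \mathrm{TH}_{n+1}, \mathrm{TH}_{n+2}, \mathrm{TH}_{n+3})$ is telescopic, so $n_1 = \mathrm{TH}_n$ and we compute $\mathrm{Ap}(\mathcal{TH}_n, \mathrm{TH}_n)$. For $n = 6k+4, 6k+5$, we must use the reversed arrangement $(\mathrm{TH}_{n+3}, \mathrm{TH}_{n+2}, \mathrm{TH}_{n+1}, \mathrm{TH}_n)$, so $n_1 = \mathrm{TH}_{n+3}$ and we compute $\mathrm{Ap}(\mathcal{TH}_n, \mathrm{TH}_{n+3})$. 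This is exactly the split that appears in the statement.

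Then for each residue class I would substitute the constants from Lemmas \ref{lem-aure51a} and \ref{lem-aure51b} and verify the bounds. For instance, for $n = 6k$ one has $c^*_2 = n/3$, $c^*_3 = n+1$, $c^*_4 = (n+2)/2$, which give ranges $\lambda_2 \in \{0,\ldots,(n-3)/3\}$, $\lambda_3 \in \{0,\ldots,n\}$, $\lambda_4 \in \{0,\ldots,n/2\}$, matching the displayed ranges for $a, b, c$ multiplying $\mathrm{TH}_{n+1}, \mathrm{TH}_{n+2}, \mathrm{TH}_{n+3}$ respectively. The other five cases are completely analogous arithmetic substitutions; the identities $c^*_j - 1 = \lfloor \text{stated bound}\rfloor$ hold because the residue of $n$ mod $6$ is exactly what makes the relevant division exact.

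There is no real obstacle here: once Propositions \ref{prop-aure3}, \ref{prop-aure4}, \ref{prop-aure52} and Lemmas \ref{lem-aure51a}, \ref{lem-aure51b}, \ref{lem-aure46} are in place, the statement is a routine bookkeeping exercise. The only place to be careful is in cases $n \equiv 4, 5 \pmod 6$, where the Apéry set is taken with respect to $\mathrm{TH}_{n+3}$ (not $\mathrm{TH}_n$) because this is the first generator in the telescopic arrangement; one must correspondingly interpret the coefficients $a, b, c$ in the stated formula as multiplying $\mathrm{TH}_n, \mathrm{TH}_{n+1}, \mathrm{TH}_{n+2}$ in that order. Since the six verifications are mechanical and identical in structure, the write-up can simply invoke Lemma \ref{lem-aure46} and omit the substitution details (or present the case $n = 6k$ in full and assert the rest by analogy, following the stylistic choice announced at the start of Subsection \ref{subsect-tetra}).
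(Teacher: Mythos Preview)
Your proposal is correct and matches the paper's approach exactly: the paper omits the proof of this corollary, pointing to the triangular case where Corollary~\ref{cor7} is obtained by plugging the values of $c^*_j$ from Lemma~\ref{lem-aure42} into Lemma~\ref{lem-aure46}, and the tetrahedral case is handled analogously using Lemmas~\ref{lem-aure51a} and~\ref{lem-aure51b}. Your explicit remark about the reversed arrangement for $n\equiv 4,5 \pmod 6$ (so that $n_1=\mathrm{TH}_{n+3}$ and the coefficients $a,b,c$ attach to $\mathrm{TH}_n,\mathrm{TH}_{n+1},\mathrm{TH}_{n+2}$) is exactly the one subtlety worth flagging, and you handle it correctly.
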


\begin{remark}
	We can get Proposition~\ref{prop5b} (for $n\geq4$) as an immediate consequence of Corollary~\ref{cor-aure6} (see Remark~\ref{rem-aure5c}).
\end{remark}

\begin{remark}
	Having in mind  Remark~\ref{rem-13}, it is possible to developed a serie of results, which are analogue to those obtain in Subsections~\ref{subsect-tri} and \ref{subsect-tetra}, for sequences of the type $\Big( {n+3 \choose 4}, {n+4 \choose 4}, {n+5 \choose 4}, {n+6 \choose 4}, {n+7 \choose 4} \Big)$ with $n\geq 6$.
\end{remark}

%

%

\end{document}